\newcommand\version{May 13, 2013}
\newtheorem{theorem}{Theorem}[section]
\newtheorem{lemma}[theorem]{Lemma}
\theoremstyle{definition}
\theoremstyle{remark}
\newtheorem{remark}[theorem]{Remark}
\numberwithin{equation}{section}
\newcommand{\const}{\mathrm{const}\ }
\renewcommand{\epsilon}{\varepsilon}
\newcommand{\R}{\mathbb{R}}
\DeclareMathOperator{\spa}{span}
\begin{document}

\title[Stablity estimates for eigenvalues --- \version]{Stability estimates for the lowest eigenvalue\\ of a Schr\"odinger operator}

\author{Eric A. Carlen}
\address{Eric A. Carlen, Department of Mathematics, Hill Center,
Rutgers University, 110 Frelinghuysen Road, Piscataway NJ 08854-8019, USA}
\email{carlen@math.rutgers.edu}

\author{Rupert L. Frank}
\address{Rupert L. Frank, Department of Mathematics,
Princeton University, Princeton, NJ 08544, USA}
\email{rlfrank@math.princeton.edu}

\author{Elliott H. Lieb}
\address{Elliott H. Lieb, Departments of Mathematics and Physics,
Princeton University, Princeton, NJ 08544,
USA}
\email{lieb@princeton.edu}

\thanks{\copyright\, 2012 by the authors. This paper may be reproduced, in
its entirety, for non-commercial purposes.\\
Work partially supported by NSF grants DMS--0901632 (E.A.C.), PHY--1068285
(R.L.F.), PHY--0965859 (E.H.L.) and the Simons Foundation grant \#230207
(E.H.L.)}

\begin{abstract}
There is a family of potentials that minimize the lowest eigenvalue of a Schr\"odinger eigenvalue under the constraint of a given $L^p$ norm of the potential. We give effective estimates for the amount by which the eigenvalue increases when the potential is not one of these optimal potentials. Our results are analogous to those for the isoperimetric problem and the Sobolev inequality. We also prove a stability estimate for H\"older's inequality, which we believe to be new.
\end{abstract}

\maketitle
\centerline{\version}

\section{Introduction}

Recently there has been considerable interest and considerable progress on
the question of stability estimates for various inequalities, both geometric
and analytic. This means finding lower bounds for deviations in energies
from their minimum values in terms of the distance to the nearest energy
minimizing configuration.
Our goal here is to extend these ideas to the lowest eigenvalue of the
Schr\"odinger operator $-\Delta+V$ in $L^2(\R^d)$.  This lowest eigenvalue
is the ground state energy of a quantum mechanical particle moving in the
potential $V$.

In 1961, Keller \cite{Ke} raised and answered the question of
finding the potential that minimizes the lowest eigenvalue
$\lambda(V)$ of $-\Delta+V$ in $d=1$ under the condition of a
specified $L^p$ norm of $V$. In other words, Keller was interested in the
sharp constant $\mathcal C_{\gamma,d}$ in the inequality
$$
\lambda(V) \geq -\mathcal C_{\gamma,d} \left( \int_{\R^d}
V_-^{\gamma+d/2} \,dx \right)^{1/\gamma} \, , 
$$
where $V_-$ is the negative part of $V$. (Our inequalities are stated
without reference to $V_+$ because $\lambda (V) \geq \lambda (-V_-)$ and
may be arbitrarily close to $ \lambda (-V_-)$ 
even if $V_+ $ is large, but supported far from the support of $V_-$.) 
Keller solved the $d=1$ problem and computed $\mathcal C_{\gamma,1}$ by
explicitly computing the optimal potential, which cannot be
explicitly computed  for
$d\geq 2$.
The value of $\mathcal C_{\gamma,d}$ for $d=2,3$ was found numerically in
\cite{LT} in the more general context of the entire negative eigenvalue part
of the spectrum. One of
the things we do in this paper is to prove the existence and uniqueness (up
to translations and dilations) of an optimum $V$. Our uniqueness proof
relies on a celebrated result of Kwong \cite{K} who extended and completed
work of Coffman \cite{C} and McLeod and Serrin \cite{MS}.

We deal with the {\it stability question}: If  for some $r>0$, the distance between
$V$ and each optimizing potential $W$ is at least $r$, by how much must 
$\lambda(V) \left( \int_{\R^d} V_-^{\gamma+d/2} \,dx \right)^{-1/\gamma}$
deviate from  the minimal value
$-\mathcal{C}_{\gamma,d}$?  Our main result is a lower bound on this energy deviation. To describe it briefly, let us assume for the moment that $V$
is non-positive. 
The way we measure distance depends on the value of $p := \gamma+d/2$. For 
$p\geq 4$,  our lower bound on the energy deviation is a multiple of  infimum of $\|V - W\|_p^2$ as $W$ 
ranges over the set  of minimizing potentials.  For 
$2\leq p\leq 4$, our lower bound is a multiple of  $\|(-V)^{p/p'} - (-W)^{p/p'}\|_{p'}^2$
as $W$ 
ranges over the set  of minimizing potentials, and where 
where $p'$ is the dual index to $p$.  In either case, our lower bound
is quadratic, as one might hope. 

Our analysis requires the concatenation of two inequalities. One is a
stability estimate for the Gagliardo-Nirenberg-Sobolev (GNS) inequality,
proved in Section~\ref{sec:gns}.  The second is a stability estimate for
H\"older's inequality $|\int fg| \leq \Vert f \Vert_p \Vert g \Vert_{p'}$
which is well known to be an equality if and only if $|f|^{p'} $ and $|g|^p$
are proportional and $fg$ has constant phase. Our Theorem~\ref{holder}
says that when there is near inequality these conditions are nearly
satisfied.

This problem belongs to a larger class of problems of which 
a classic example, going back to Bonnesen \cite{Bo} in 1924, is to
quantitatively estimate  how much the ratio of the square of the
perimeter to the area of a planar domain $\Omega$
exceeds that of a disc --- the isoperimetric minimizer --- in terms of
some measure of the deviation of $\Omega$ from a disc. 
Inequalities of this kind were
termed Bonnesen-type inequalities by Osserman \cite{O}. 

Hall, Hayman and Weitsman \cite{HHW} and Hall \cite{Ha} provided the first
stability theorem for the isoperimetric inequality in  $d \geq
3$. 
They introduced the {\it Fraenkel asymmetry} as an approporate measure of
deviation from a ball and proved that the isoperimetric deficit was at least
quartic in this asymmetry. The proof that the deficit is actually 
quadratic in the 
assymmetry was finally achieved 
by Fusco, Maggi and Pratelli
\cite{FMP1}. See the review paper \cite{Ma}. 

Other problems of this type have also been addressed recently. The
classic Faber--Krahn inequality says that the lowest eigenvalue of the
Dirichlet Laplacian in a domain of unit volume occurs for the ball. The
increase of the lowest eigenvalue when the domain is not a ball has been
quantified in another paper of Fusco, Maggi and Pratelli \cite{FMP2} and, in
fact, they did it for the $p$-Laplacian, not just for $p=2$.

A closely related question is what happens to the $p$-Laplacian Sobolev quotient for functions that deviate from the famous ratio-minimizing functions $(1+|x|^{p'})^{-(d-p)/p}$, found by Bliss \cite{B}, Rosen \cite{R}, Aubin \cite{A} and Talenti \cite{T}. For $p=2$, this question was raised in \cite{BL} and answered six years later in \cite{BE} by means of a clever compactness argument, which we will find useful in the present paper. Result for $p\neq 2$ were recently found by Cianchi, Fusco, Maggi and Pratelli \cite{CFMP}.


\section{Main result and method of proof}

\subsection{Statement of the main result}

We define
\begin{equation}
 \label{eq:ev}
\lambda(V) = \inf\left\{ \int_{\R^d} \left( |\nabla\psi|^2 + V|\psi|^2 \right) \,dx :\ \psi\in H^1(\R^d)\,, \|\psi\|_2=1 \right\} \,.
\end{equation}
Here $\|\cdot\|_q$ denotes the $L^q$-norm. It is well-known \cite[Ch.
11]{LiLo} that $\lambda(V)$ is finite if, for instance, $V\in
L^p(\R^d)$ for $p\geq 1$ in $d=1$, for $p>1$ in $d=2$ and
$p\geq d/2$ if $d\geq 3$. Moreover, if $V_-$ vanishes at infinity in the
sense that $|\{ V_-> \tau\}|<\infty$ for any $\tau>0$, then $\lambda(V)\leq
0$. In case $\lambda(V)$ is finite and strict inequality $\lambda(V)< 0$
holds, an optimizing $\psi$ exists and $\lambda(V)$ is the smallest
eigenvalue of the operator $-\Delta +V$ in $L^2(\R^d)$.

Clearly, in order to define our problem we need to know the following theorem, which will be proved in the following subsection and in Section \ref{sec:gns0}.

\begin{theorem}[Existence and uniqueness of an optimal potential]\label{unique}
 Let $\gamma> 1/2$ if $d=1$ and $\gamma>0$ if $d\geq 2$ and define
$$
\mathcal C_{\gamma,d} := \sup \frac{|\lambda(V)|}{\left( \int_{\R^d} V_-^{\gamma+d/2} \,dx\right)^{1/\gamma} } \,.
$$
Then there is a non-positive function $\mathcal V$, which is unique up to translations and dilatons, such that
\begin{align}\label{eq:m}
 \mathcal M & = \left\{ V \in L^{\gamma+d/2}(\R^d) :\ |\lambda(V)| = \mathcal C_{\gamma,d} \left( \int_{\R^d} V_-^{\gamma+d/2} \,dx \right)^{1/\gamma} \right\} \nonumber \\
& = \left\{ b^2 \mathcal V(b(\cdot-a)) : \ b>0\,, a\in\R^d \right\} \,.
\end{align}
\end{theorem}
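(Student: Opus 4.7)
The plan is to reduce the Keller-type variational problem to a sharp Gagliardo--Nirenberg--Sobolev (GNS) inequality by H\"older duality, to obtain the GNS optimizer via symmetrization and radial compactness, and to derive uniqueness from the classical ODE theorem of Kwong \cite{K}.

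Since $\lambda(V)\geq\lambda(-V_-)$, one may assume $V\leq 0$. For any $\psi\in H^1(\R^d)$ with $\|\psi\|_2=1$, H\"older's inequality with exponents $p=\gamma+d/2$ and $p'=p/(p-1)$ yields
$$
\int_{\R^d} V_-\psi^2\,dx\ \leq\ \|V_-\|_p\,\|\psi\|_q^2,\qquad q:=2p'=\frac{2(2\gamma+d)}{2\gamma+d-2},
$$
with equality iff $V_-^{p-1}\propto \psi^2$. Therefore
$$
-\lambda(V)\ \leq\ \sup_{\|\psi\|_2=1}\left(\|V_-\|_p\,\|\psi\|_q^2-\|\nabla\psi\|_2^2\right),
$$
and optimizing over the magnitude of $\|\nabla\psi\|_2$ converts the right-hand side to a universal constant times $\|V_-\|_p^{p/\gamma}$. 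A short computation shows that this constant equals $\mathcal C_{\gamma,d}$ and is expressible through the sharp constant in the GNS inequality
$$
\|\psi\|_q^2\ \leq\ C_{\mathrm{GNS}}\,\|\nabla\psi\|_2^{2\theta}\,\|\psi\|_2^{2(1-\theta)},\qquad \theta=\frac{d}{2\gamma+d}.
$$
The hypotheses on $\gamma$ place $q$ strictly between $2$ and $2^*$ (with $2^*=\infty$ if $d\leq 2$), i.e.\ in the subcritical GNS range, so this reduces the entire problem, including the determination of $\mathcal M$, to the analogous question for GNS.

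To produce a GNS optimizer I would symmetrize: symmetric decreasing rearrangement preserves all $L^r$ norms and does not increase $\|\nabla\psi\|_2$, so a maximizing sequence may be taken radial and positive. After normalizing so that $\|\psi_n\|_2=\|\nabla\psi_n\|_2=1$, the sequence is bounded in $H^1(\R^d)$; Strauss's compactness lemma gives strong $L^q$ convergence in the subcritical range, and lower semicontinuity of the gradient norm produces an optimizer $\psi_\star$.

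For uniqueness, any optimizer solves, after appropriate translation and dilation, the Euler--Lagrange equation
$$
-\Delta\psi+\psi=\psi^{q-1},\qquad \psi>0,\ \psi\in H^1(\R^d).
$$
A strict rearrangement argument forces $\psi$ to be radial about some point, and Kwong's theorem \cite{K} states that this semilinear equation has a unique positive radial $H^1$ solution. Unwinding the H\"older duality, the equality condition $V_-\propto \psi^{2/(p-1)}$ then shows that $\mathcal V$ is unique up to translations and dilations. The substantive obstacle is the ODE uniqueness in Kwong's theorem; the rest is standard variational calculus combined with a symmetrization-plus-radial-compactness argument.
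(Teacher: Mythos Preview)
Your reduction via H\"older duality to a sharp subcritical GNS problem, followed by Kwong's uniqueness theorem, is exactly the paper's strategy; the overall architecture is the same. Where you differ is in two technical choices. For existence, you symmetrize and invoke the Strauss radial compactness lemma, whereas the paper uses Lieb's ``compactness up to translations'' theorem (via the $pqr$-lemma and Br\'ezis--Lieb). Your route is slightly more elementary for the bare existence statement, but the paper's route yields more: it shows that \emph{every} minimizing sequence is $H^1$-relatively compact modulo translations (Lemma~\ref{befar}), and this stronger conclusion is precisely what is needed later to pass from the local stability estimate to the global one in Theorem~\ref{berem}. For radiality of an arbitrary optimizer, you appeal to the equality case in P\'olya--Szeg\H{o}, while the paper uses the moving-plane method of Gidas--Ni--Nirenberg; both are valid, though your route tacitly requires first showing that any optimizer has a sign (the paper does this by a strict-concavity argument on $x\mapsto x^{2/q}$), and the Brothers--Ziemer equality case needs a mild regularity check. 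None of this is a genuine gap; your plan would prove Theorem~\ref{unique}.
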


Our problem and our main result is the following stability bound for the optimization problem related to $\mathcal C_{\gamma,d}$, which will be proved in Subsection \ref{sec:main}, with the aid of some results that will be proved in Sections \ref{sec:holder} and \ref{sec:gns}.

It is useful to define a parameter $q$ so that $L^{q/2}$ is the dual of $L^{\gamma+d/2}$. That is:
\begin{equation}
 \label{eq:dual}
\frac{1}{\gamma+d/2} + \frac 2q = 1 \,, \qquad \mathrm{i.e.,} \qquad 
\gamma+d/2 = \frac{q}{q-2} \,.
\end{equation}

\begin{theorem}[Stability]\label{main}
 Let $\gamma> 1/2$ if $d=1$ and $\gamma>0$ if $d\geq 2$. Then:
\begin{enumerate}
\item[(i)] For $\gamma + d/2 \leq 2$, 
 there is a constant $c_{\gamma,d}>0$ such that for any $V\in L^{\gamma+d/2}(\R^d)$,
\begin{equation} \label{eq:main1}
\frac{|\lambda(V)|}{\left(\int_{\R^d} V_-^{\gamma+d/2} \,dx\right)^{1/\gamma}} \leq \mathcal C_{\gamma,d} \left( 1 - c_{\gamma,d} \inf_{W\in\mathcal M} \frac{\|V_--W_-\|_{\gamma+d/2}^2}{\|V_-\|_{\gamma+d/2}^2} \right) \,.
\end{equation}
\item[(ii)] For  $\gamma + d/2 \geq 2$,   there is a constant $c_{\gamma,d}>0$ such that for any $V\in L^{\gamma+d/2}(\R^d)$,
\begin{equation} \label{eq:main2}
\frac{|\lambda(V)|}{\left(\int_{\R^d} V_-^{\gamma+d/2} \,dx\right)^{1/\gamma}} \leq \mathcal C_{\gamma,d} 
\left( 1 - c_{\gamma,d} \inf_{W\in\mathcal M} 
\frac{ \left\|V_-^{2/(q-2)}-W_-^{2/(q-2)}\right\|^2_{q/2}}{\left\|V_-^{2/(q-2)}\right\|_{q/2}^2  } \right) \,,
\end{equation}
where $q$ is related to $\gamma$ and $d$ by (\ref{eq:dual}).
\end{enumerate}
\end{theorem}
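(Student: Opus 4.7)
The plan is to obtain Keller's inequality $|\lambda(V)| \leq \mathcal{C}_{\gamma,d}\|V_-\|_{p}^{p/\gamma}$ (where $p := \gamma + d/2$) as the concatenation of H\"older's inequality and the sharp Gagliardo--Nirenberg--Sobolev (GNS) inequality, and then to promote the quantitative stability of each of these two ingredients---namely Theorem~\ref{holder} and the stability of GNS from Section~\ref{sec:gns}---into the desired stability statement for $\lambda$. As a preliminary reduction, $V_+$ only increases $\lambda(V)$, leaves $\|V_-\|_p$ unchanged, and does not enter the right-hand sides of~\eqref{eq:main1}--\eqref{eq:main2}, so one may assume $V = -U$ with $U \geq 0$; the scale invariance $V(x) \mapsto b^2 V(bx)$ then lets one normalize $\|U\|_p = 1$. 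Let $\psi \in H^1(\R^d)$ with $\|\psi\|_2 = 1$ be the ground state, so that $|\lambda(V)| = \int U \psi^2\,dx - \|\nabla\psi\|_2^2$.

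Using $2p' = q$, H\"older gives $\int U\psi^2\,dx \leq \|U\|_p \|\psi\|_q^2 = \|\psi\|_q^2$. The sharp GNS inequality bounds $\|\psi\|_q^2$ by a constant times $\|\nabla\psi\|_2^{2\theta}$ with $\theta = d/(2p) < 1$, and a one-variable optimization in $\|\nabla\psi\|_2^2$ produces $|\lambda(V)| \leq \mathcal C_{\gamma,d}$. Equality requires $U \propto \psi^{2(p-1)}$ (H\"older), $\psi$ to be a GNS extremizer (GNS), and $\psi$ to have the optimal $L^2$-gradient norm (scale). Since GNS extremizers are exactly the dilations and translations of the ground states of the potentials in $\mathcal M$, equality characterizes $V \in \mathcal M$.

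For stability, set $\delta := 1 - |\lambda(V)|/\mathcal C_{\gamma,d}$; then $\delta$ dominates each of the H\"older deficit $\delta_H := \|U\|_p\|\psi\|_q^2 - \int U\psi^2\,dx$ and the GNS deficit (after accounting for the scale). Applying Theorem~\ref{holder} to the pair $(U, \psi^2)$ produces a constant $c>0$ with $\|U - c\psi^{2(p-1)}\|_p^2 \lesssim \delta_H$ in case (i) and $\|U^{p-1} - c'\psi^2\|_{p'}^2 \lesssim \delta_H$ in case (ii). The GNS stability provides a GNS extremizer $\psi_*$ (up to dilation and translation, corresponding to some $W \in \mathcal M$) with $\|\psi - \psi_*\|_{H^1}^2$ bounded by a constant times the GNS deficit. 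A triangle inequality together with a nonlinear continuity estimate for $\phi \mapsto \phi^{2(p-1)}$ then yields
$$
\inf_{W \in \mathcal M} \|V_- - W_-\|_p^2 \lesssim \delta \qquad \text{or} \qquad \inf_{W\in\mathcal M} \|V_-^{p-1} - W_-^{p-1}\|_{p'}^2 \lesssim \delta,
$$
which, after undoing the normalization, are precisely~\eqref{eq:main1} and~\eqref{eq:main2}.

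The hardest step will be that nonlinear continuity estimate for $\phi \mapsto \phi^{2(p-1)}$: the map $t \mapsto t^{p-1}$ is locally Lipschitz on $[0,\infty)$ when $p \geq 2$ but only H\"older-continuous at the origin when $p \leq 2$, which is precisely the reason the distance must be measured differently in the two cases of the theorem. Secondary difficulties are the consistent choice of the proportionality constant and of the translation/scale of $\psi_*$ between the H\"older and GNS stability inputs, and a routine dichotomy to handle the large-$\delta$ regime: when $\delta$ is bounded below, the a priori bound $\inf_W \|V_- - W_-\|_p \leq 2\|V_-\|_p$ (and its case (ii) analogue) renders the inequality trivially true for sufficiently small $c_{\gamma,d}$.
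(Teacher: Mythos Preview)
Your overall strategy coincides with the paper's: write $\mathcal{C}_{\gamma,d} - |\lambda(V)| = (\mathcal{C}_{\gamma,d} + \mathcal{E}[\psi]) + \mathcal{H}[\psi,U]$, bound each nonnegative summand below via the H\"older remainder (Theorem~\ref{remainder}) and the GNS remainder (Theorem~\ref{becor}, which is Theorem~\ref{berem} combined with the Lipschitz estimates of Lemma~\ref{lip}), and close up by the triangle inequality.

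However, your exponents are wrong and your explanation of the case split is misplaced. In case~(i) ($p\leq 2$, equivalently $q\geq 4$) one applies Theorem~\ref{holder} with $f=|\psi|^2/\|\psi\|_q^2\in L^{p'}$ and $g=U\in L^p$; then $\mathcal{D}_{p'}(f)=|\psi|^{q-2}/\|\psi\|_q^{q-2}$ and the remainder controls $\bigl\|U - |\psi|^{q-2}/\|\psi\|_q^{q-2}\bigr\|_p$, not $\|U-c\,\psi^{2(p-1)}\|_p$ (note $2(p-1)=4/(q-2)\neq q-2$ unless $q=4$). The continuity estimate you then need is for $\phi\mapsto |\phi|^{q-2}$ from $L^q$ to $L^{q/(q-2)}$ in case~(i) and for $\phi\mapsto|\phi|^2$ from $L^q$ to $L^{q/2}$ in case~(ii), and \emph{both} are Lipschitz by Lemma~\ref{lip}. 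Thus the dichotomy is not caused by a loss of Lipschitz continuity of a power map; it comes from Theorem~\ref{holder}, which yields a quadratic remainder only when the duality map is applied to the factor lying in $L^r$ with $r\geq 2$---so one must take $\mathcal{D}_{p'}(|\psi|^2)$ when $p'\geq 2$ (case~(i)) and $\mathcal{D}_p(U)=U^{2/(q-2)}$ when $p\geq 2$ (case~(ii)), and this is what forces the two different distances. One technical point you also elide: the H\"older remainder carries the prefactor $\|\psi\|_q^2$, while after the triangle inequality you want the fixed constant $\|\phi\|_q^2$ for $\phi\in\mathcal{G}$; the paper trades one for the other via $\|\psi\|_q^2\geq \tfrac12\|\phi\|_q^2 - (\|\psi\|_q-\|\phi\|_q)^2$, absorbing the error with the additional $|\|\psi\|_q-\|\phi\|_q|^2$ term built into Theorem~\ref{becor}.
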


\begin{remark}
 Indeed, we prove slightly stronger bounds: The infima in \eqref{eq:main1} and \eqref{eq:main2} can be restricted to $W\in\mathcal M$ such that $\|W_-\|_{\gamma+d/2}=\|V_-\|_{\gamma+d/2}$ and $\|W_-^{2/(q-2)}\|_{q/2}=\|V_-^{2/(q-2)}\|_{q/2}$, respectively. This fixes the scale parameter $b$ in \eqref{eq:m}, and the only variation is over translations $a$.
\end{remark}

\begin{remark} Suppose $V$ and $W$ satisfy $\|V_-\|_{\gamma+d/2} =  \|W_-\|_{\gamma+d/2}$. As a consequence of \eqref{ng1} below, we have that
\begin{equation}\label{trans}
\left(\gamma+\frac{d}{2}\right)^{-1} \left(\frac12 \|V_-- W_-\|_{\gamma+d/2}\right)^{\gamma+d/2-1} \leq \left\|V_-^{2/(q-2)} - W_-^{2/(q-2)}\right\|_{q/2}\ .
\end{equation}
Hence it follows from this, the previous remark and part (ii) of Theorem~\ref{main} that:
\begin{enumerate}
\item[(iii)] For  $\gamma + d/2 \geq 2$, there is a constant $\tilde c_{\gamma,d}>0$ such that for any $V\in L^{\gamma+d/2}(\R^d)$,
\begin{equation} \label{eq:main3}
\frac{|\lambda(V)|}{\left(\int_{\R^d} V_-^{\gamma+d/2} \,dx\right)^{1/\gamma}} \leq \mathcal C_{\gamma,d} 
\left( 1 - \tilde c_{\gamma,d} \inf_{W\in\mathcal M} 
\frac{ \|V_--W_-\|^{2\gamma+d-2}_{\gamma+d/2}}{\|V_-\|_{\gamma+d/2}^{2\gamma+d-2}  } \right) \,.
\end{equation}
\end{enumerate}

Thus, we may always express our stability bound in terms of $\|V_--W_-\|_{\gamma+d/2}$, but for $\gamma + d/2 \geq 2$,
our bound will not be quadratic in this norm. It is, however, quadratic in
the $q/2$ norm, as in \eqref{eq:main2}.
\end{remark}


\subsection{Viewpoint}

We begin with some general remarks about our proof strategy for proving both Theorem \ref{unique} and Theorem \ref{main}. 

The definition \eqref{eq:ev} implies that the optimization problem for $\mathcal C_{\gamma,d}$ can be written as a double infimum over both $V$ and $\psi$,
$$
\mathcal C_{\gamma,d} = - \inf\left\{ \frac{\int_{\R^d} \left(|\nabla\psi|^2 +V \psi^2 \right) dx}{ \left( \int_{\R^d} |V|^{\gamma+d/2} \,dx \right)^{1/\gamma} \left( \int_{\R^d} \psi^2 \,dx \right) } :\  \psi\in H^1(\R^d)\,,\ V\in L^{\gamma+d/2}(\R^d) \right\} \,.
$$
Since the quotient in this formula remains invariant if we replace both $V(x)$ by $b^2 V(bx)$ and $\psi(x)$ by $c\psi(bx)$ for arbitrary $b,c>0$, we can restrict the infimum to potentials $V$ with $\int_{\R^d} |V|^{\gamma+d/2} \,dx=1$ and to functions $\psi$ with $\int_{\R^d} \psi^2 \,dx=1$. Moreover, since the quotient does not increase if we replace $V$ by $-|V|$, we can restrict the infimum to potentials $V\leq 0$. We will use the notation $V=-U$ and summarize these findings as
$$
\mathcal C_{\gamma,d} = - \inf\left\{ \int_{\R^d} \left(|\nabla\psi|^2 -U \psi^2 \right) dx :\  U\geq 0\,,\ \int_{\R^d} U^{\gamma+d/2} \,dx = \int_{\R^d} \psi^2 \,dx =1 \right\} \,.
$$
If $q$ is related to $\gamma$ by (\ref{eq:dual}), 
then our assumptions on $\gamma$ imply that $2<q< \infty$ if $d=1,2$ and $2<q<2d/(d-2)$ if $d\geq 3$. 
With $q$ related to $\gamma$ in this way, define
\begin{equation}\label{edef}
\mathcal E[\psi] = \int_{\R^d} |\nabla\psi|^2 \,dx - \left( \int_{\R^d} |\psi|^q \,dx \right)^{2/q}
\end{equation}and
\begin{equation}\label{hdef}
\mathcal H[\psi,U] = \left( \int_{\R^d} |\psi|^q \,dx \right)^{2/q} - \
int_{\R^d} U \psi^2 \,dx \,.
\end{equation}
We then write
\begin{equation}\label{eq:constant}
\mathcal C_{\gamma,d} = - \inf\left\{ \mathcal E[\psi] + \mathcal H[\psi,U]
:\  U\geq 0\,,\ \int_{\R^d} U^{\gamma+d/2} \,dx = \int_{\R^d} \psi^2 \,dx =1
\right\} \,.
\end{equation}
Because of definition \eqref{eq:dual}, the normalization $\int_{\R^d} U^{\gamma+d/2}\,dx=1$ and H\"older's inequality, 
\begin{equation}
 \label{eq:h}
\mathcal H[\psi,U]\geq 0
\end{equation}
with equality if and only if $U=(|\psi|/\|\psi\|_q)^{q-2}$. To summarize, we have shown that
\begin{equation}
 \label{eq:inf}
-\mathcal C_{\gamma,d}  = \inf\left\{ \mathcal E[\psi] :\ \psi\in H^1(\R^d)\,,\ \int_{\R^d} \psi^2\,dx =1 \right\}
\end{equation}
and that the family $\mathcal M$ of optimal potentials, defined by
$$
\mathcal M = \left\{ V \in L^{\gamma+d/2}(\R^d) :\ |\lambda(V)| = \mathcal C_{\gamma,d} \left( \int_{\R^d} V_-^{\gamma+d/2} \,dx \right)^{1/\gamma} \right\} \,,
$$
is related to the family $\mathcal G$ of optimizers in \eqref{eq:inf}, defined by
\begin{equation} \label{eq:g}
\mathcal G = \left\{ \psi\in H^1(\R^d):\ \mathcal E[\psi] = -\mathcal C_{\gamma,d} \ \text{and}\ \int_{\R^d} \psi^2\,dx =1 \right\} \,,
\end{equation}
through the relation
\begin{equation}
 \label{eq:keller2}
\mathcal M = \left\{ - b^2 \frac{|\psi(b\, \cdot)|^{q-2}}{\|\psi\|_q^{q-2}} :\ \psi \in \mathcal G \,, b>0 \right\} \,.
\end{equation}

In Section \ref{sec:gns0} we shall recall three facts: First, the infimum on the right side of \eqref{eq:inf} is finite. Second, the infimum on the right side of \eqref{eq:inf} is attained, that is, $\mathcal G\neq \emptyset$. Third, the minimizer in \eqref{eq:inf} is unique up to translations and a sign. These three facts follow with only little effort from known results, the deepest of which is the uniqueness theorem of Kwong in dimensions $d\geq 2$ \cite{K}.

These three facts, together with \eqref{eq:inf} and \eqref{eq:keller2}, immediately imply Theorem \ref{unique}.

The discussion has also placed in view a strategy for proving Theorem~\ref{main}: Theorem~\ref{main} shall follow from
a stability analysis for H\"older's inequality \eqref{eq:h} and for the Gagliardo--Nirenberg--Sobolev (GNS)-type inequality \eqref{eq:inf}. This is carried out  in the next subsection.



\subsection{Proof of Theorem \ref{main}}\label{sec:main}

As indicated earlier, the proof of Theorem \ref{main} will be accomplished using two stability theorems, one for H\"older's inequality, and another for the GNS inequality (\ref{gns}). We state both stability theorems here in a form suitable for our application and refer for more general versions to Sections \ref{sec:holder} and \ref{sec:gns}.

The  stability analysis of  H\"older's inequality yields the following theorem which is proved in Section \ref{sec:holder}.

\begin{theorem}\label{remainder}  For $q\geq 2$, let $0\leq U\in L^{q/(q-2)}$ have $\|U\|_{q/(q-2)} =1$, and let $\psi\in L^{q}$. Then
for $q\geq 4$, 
\begin{equation} \label{hlow1}
\mathcal H[\psi,U] \geq \frac{1}{2(q-2)} \|\psi\|_q^2 \left\Vert  \frac{|\psi|^{q-2}}{\|\psi\|_q^{q-2}} - U\right\Vert_{q/(q-2)}^2\ ,
\end{equation}
while for $2< q < 4$, 
\begin{equation} \label{hlow2}
\mathcal H[\psi,U] \geq \frac{q-2}{8} \|\psi\|_q^2 \left\Vert  \frac{|\psi|^{2}}{\|\psi\|_q^{2}} - U^{2/(q-2)}\right\Vert_{q/2}^2\ .
\end{equation}
\end{theorem}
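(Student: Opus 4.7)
My plan is to prove both (\ref{hlow1}) and (\ref{hlow2}) simultaneously by reducing each to a single stability estimate for H\"older's inequality. First, since only $|\psi|$ enters the two inequalities, I may assume $\psi \geq 0$; and by homogeneity I may assume $\|\psi\|_q = 1$, so that $\mathcal{H}[\psi,U] = 1 - \int U\psi^2\,dx$. For case (i) with $q \geq 4$, set $r = q/(q-2) \in (1,2]$, $A = \psi^{q-2}$, $B = U$; then $\|A\|_r = \|B\|_r = 1$, $A^{r-1} = \psi^2$, $\int A^{r-1}B\,dx = \int U\psi^2\,dx$, and the target constant $1/(2(q-2))$ equals $(r-1)/4$. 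For case (ii) with $2 < q < 4$, set $r = q/2 \in (1,2)$, $A = U^{2/(q-2)}$, $B = \psi^2$; then $\|A\|_r = \|B\|_r = 1$, $A^{r-1} = U$, and $(q-2)/8 = (r-1)/4$. In either case the theorem reduces to the single claim: for $r \in (1,2]$ and $A, B \geq 0$ with $\|A\|_r = \|B\|_r = 1$,
$$
1 - \int A^{r-1} B\,dx \;\geq\; \frac{r-1}{4}\,\|A - B\|_r^2.
$$

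To prove this unified inequality I would introduce the Bregman-type divergence $D(b,a) := b^r - a^r - r a^{r-1}(b-a)$. Using $\int A^r\,dx = \int B^r\,dx = 1$, a direct expansion gives $\int D(B,A)\,dx = r\bigl(1 - \int A^{r-1}B\,dx\bigr)$. By Taylor's theorem with integral remainder, $D(b,a) = r(r-1)(b-a)^2\int_0^1 (1-s)(a + s(b-a))^{r-2}\,ds$. Since $r \leq 2$, the map $t\mapsto t^{r-2}$ is nonincreasing on $(0,\infty)$, so $(a + s(b-a))^{r-2} \geq \max(a,b)^{r-2}$; performing the $s$-integral then yields the pointwise estimate
$$
D(b,a) \;\geq\; \tfrac{r(r-1)}{2}\,\max(a,b)^{r-2}(a-b)^2.
$$

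The remaining step is to convert this weighted $L^2$ control into a squared $L^r$ norm. Splitting $|A-B|^r = \bigl(|A-B|^2\max(A,B)^{r-2}\bigr)^{r/2}\max(A,B)^{r(2-r)/2}$ and applying H\"older's inequality with exponents $2/r$ and $2/(2-r)$ gives
$$
\|A - B\|_r^r \;\leq\; \Bigl(\int |A-B|^2 \max(A,B)^{r-2}\,dx\Bigr)^{r/2}\Bigl(\int \max(A,B)^r\,dx\Bigr)^{(2-r)/2}.
$$
Since $\max(A,B)^r \leq A^r + B^r$, the second factor is at most $2^{(2-r)/2}$. Combining with the pointwise bound on $D$ and using $2^{(r-2)/r} \geq 1/2$ for $r \geq 1$,
$$
r\Bigl(1 - \int A^{r-1} B\,dx\Bigr) \;\geq\; \tfrac{r(r-1)}{2}\cdot 2^{(r-2)/r}\,\|A-B\|_r^2 \;\geq\; \tfrac{r(r-1)}{4}\,\|A-B\|_r^2,
$$
which closes the proof after dividing by $r$. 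The trickiest part of the plan is precisely this last conversion: the Taylor remainder naturally produces a weighted $L^2$ quantity carrying the degenerate weight $\max(A,B)^{r-2}$, and the H\"older bridge to a squared $L^r$ norm introduces a factor $2^{(r-2)/r}$ that is just barely enough to hit the target constant $(r-1)/4$, so the numerical constants need to be tracked with care.
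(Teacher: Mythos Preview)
Your proof is correct. The reduction step---normalizing $\|\psi\|_q=1$, choosing $r=q/(q-2)$ with $(A,B)=(\psi^{q-2},U)$ for $q\ge4$ and $r=q/2$ with $(A,B)=(U^{2/(q-2)},\psi^2)$ for $2<q<4$---is exactly the same substitution the paper makes when it applies Theorem~\ref{holder} with $f=|\psi|^2/\|\psi\|_q^2$, $g=U$ in one case and the roles swapped in the other; in both approaches the problem collapses to the single estimate $1-\int A^{r-1}B\ge \tfrac{r-1}{4}\|A-B\|_r^2$ for unit vectors $A,B\ge0$ in $L^r$, $r\in(1,2]$.

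Where the two proofs diverge is in how this core estimate is obtained. The paper writes $1+\int A^{r-1}B=\int A^{r-1}(A+B)\le\|A+B\|_r$ and then invokes the $2$-uniform convexity inequality $\|\tfrac{A+B}{2}\|_r\le 1-\tfrac{r-1}{8}\|A-B\|_r^2$ from \cite{BCL}. Your argument is instead a direct Bregman/Taylor computation: the identity $\int D(B,A)=r\bigl(1-\int A^{r-1}B\bigr)$ together with the pointwise bound $D(b,a)\ge\tfrac{r(r-1)}{2}\max(a,b)^{r-2}(a-b)^2$ gives a weighted $L^2$ deficit, which you then convert to $\|A-B\|_r^2$ by H\"older and $\int\max(A,B)^r\le2$. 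Your route is more self-contained (it does not appeal to an external uniform convexity result) and in fact yields the slightly sharper constant $\tfrac{r-1}{2}\cdot 2^{(r-2)/r}$ before you discard the factor $2^{(r-2)/r}\ge\tfrac12$; the paper's route has the virtue of placing the estimate as an instance of a general H\"older stability theorem (Theorem~\ref{holder}) that also handles complex phases.
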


The other result we require is a stability theorem for the GNS inequality. The following theorem will be proved in Section~\ref{sec:gns}.

\begin{theorem}\label{becor}
Let $2< q<\infty$ if $d=1,2$ and $2< q<2d/(d-2)$ if $d\geq 3$. Let $\psi\in H^1(\R^d)$ with $\int_{\R^d} \psi^2\,dx =1$.
\if false, and define
$$
\alpha = \begin{cases}
           1/(q-2) & \text{if}\ 2<q<3 \,, \\
1 & \text{if}\ q\geq 3 \,.
        \end{cases}
$$
\fi
 Then there is a constant $c_{q,d}>0$ such that for $q\geq 4$, 
\begin{align}\label{rem1}
&\mathcal E[\psi] \geq -\mathcal C_{\gamma,d} \nonumber\\
& \qquad + c_{q,d} \inf_{\phi\in\mathcal G} \left( \max\left\{\|\psi\|_q^2,\|\phi\|_q^2\right\} \left\| \frac{|\psi|^{q-2}}{\|\psi\|_q^{q-2}} - \frac{|\phi|^{q-2}}{\|\phi\|_q^{q-2}} \right\|_{q/(q-2)}^{2} + \left| \|\psi\|_q - \|\phi\|_q \right|^2 \right)\, ,
\end{align}
and such that for $2 < q < 4$,
\begin{align}\label{rem2}
&\mathcal E[\psi] \geq -\mathcal C_{\gamma,d} \nonumber\\
& \qquad + c_{q,d} \inf_{\phi\in\mathcal G} \left( \max\left\{\|\psi\|_q^2,\|\phi\|_q^2\right\} \left\| \frac{|\psi|^{2}}{\|\psi\|_q^{2}} - \frac{|\phi|^{2}}{\|\phi\|_q^{2}} \right\|_{q/2}^{2} + \left| \|\psi\|_q - \|\phi\|_q \right|^2 \right)\ .
\end{align}
\end{theorem}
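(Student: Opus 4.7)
The plan is to adapt the concentration-compactness plus spectral-analysis strategy of Bianchi--Egnell \cite{BE}. Arguing by contradiction, suppose there is a sequence $\psi_n\in H^1(\R^d)$ with $\|\psi_n\|_2=1$ for which the ratio of the deficit $\mathcal E[\psi_n]+\mathcal C_{\gamma,d}$ to the right-hand side of \eqref{rem1} (resp.~\eqref{rem2}) tends to zero. The right-hand side being uniformly bounded, the deficit goes to zero, so $\{\psi_n\}$ is a minimizing sequence for \eqref{eq:inf}. By Lions' concentration-compactness and the explicit form of $\mathcal G$ from Theorem~\ref{unique}, vanishing is ruled out (it would force $\|\psi_n\|_q\to 0$, contradicting $\mathcal E[\psi_n]\to -\mathcal C_{\gamma,d}<0$), and splitting is ruled out by strict superadditivity of $\|\cdot\|_q^2$ on disjoint supports for $q>2$. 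Hence, after translations $a_n\in\R^d$ and sign choices, $\psi_n(\cdot-a_n)\to\phi$ strongly in $H^1(\R^d)$ for some $\phi\in\mathcal G$.

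Writing $\psi_n=\phi+\eta_n$ (having absorbed the translation and sign, and with $\int\phi\,\eta_n=0$ after rescaling to first order to enforce $\|\psi_n\|_2=1$), I would use the Euler--Lagrange equation
$$
-\Delta\phi=\|\phi\|_q^{2-q}\phi^{q-1}-\mathcal C_{\gamma,d}\phi
$$
to Taylor-expand $\mathcal E[\psi_n]+\mathcal C_{\gamma,d}=\tilde Q[\eta_n]+o(\|\eta_n\|_{H^1}^2)$, where
$$
\tilde Q[\eta]=\langle\eta,\mathcal L\eta\rangle+(q-2)\|\phi\|_q^{2-2q}\Big(\int_{\R^d}\phi^{q-1}\eta\,dx\Big)^{\!2},
$$
$$
\mathcal L=-\Delta+\mathcal C_{\gamma,d}-(q-1)\|\phi\|_q^{2-q}\phi^{q-2}.
$$
Differentiating the Euler--Lagrange equation in translations gives $\partial_{x_j}\phi\in\ker\mathcal L$, and the non-degeneracy theorem of Kwong \cite{K} (invoked in Section~\ref{sec:gns0}) guarantees that these exhaust $\ker\mathcal L$. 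The operator $\mathcal L$ has exactly one negative eigenvalue; one verifies $\tilde Q[\phi]=0$, but the orthogonality constraint $\int\phi\,\eta=0$ excludes $\eta=\phi$, and a separate computation along the dilation mode $\eta_s=(d/2)\phi+x\cdot\nabla\phi$, using $q<2d/(d-2)$, shows $\tilde Q[\eta_s]>0$. A spectral-gap argument then yields $\tilde Q[\eta]\geq c\|\eta\|_{H^1}^2$ on $\{\eta:\int\phi\,\eta=0,\ \int\eta\,\partial_{x_j}\phi=0\ \forall j\}$; choosing $a_n$ to minimize $\|\psi_n(\cdot-a_n)-\phi\|_2$ arranges the orthogonality to the translation modes automatically.

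It then remains to convert $\|\eta_n\|_{H^1}$ into the distances appearing in \eqref{rem1} and \eqref{rem2}. For $q\geq 4$, the pointwise inequality
$$
\big||\psi_n|^{q-2}-\phi^{q-2}\big|\leq(q-2)\big(|\psi_n|^{q-3}+\phi^{q-3}\big)|\eta_n|
$$
combined with H\"older's inequality gives $\big\||\psi_n|^{q-2}-\phi^{q-2}\big\|_{q/(q-2)}\leq C\|\eta_n\|_q\leq C'\|\eta_n\|_{H^1}$; after re-normalizing densities and estimating $\big|\|\psi_n\|_q-\|\phi\|_q\big|\leq\|\eta_n\|_q$, one obtains the form in \eqref{rem1}. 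For $2<q<4$, the identity $|\psi|^2-\phi^2=(\psi-\phi)(\psi+\phi)$ directly furnishes the corresponding $L^{q/2}$-bound, yielding \eqref{rem2}. Assembling the three steps contradicts the assumption that the ratio tends to zero.

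The main obstacle is the spectral step: the rank-one correction in $\tilde Q$ must precisely neutralize the unique negative eigenvalue of $\mathcal L$ on the orthogonal complement $\{\int\phi\,\eta=0\}$, and one must combine this with Kwong's theorem to exclude additional zero modes beyond the translation directions. Once this coercivity of $\tilde Q$ is established, the rest of the argument is structural and parallels the Bianchi--Egnell proof for the Sobolev inequality.
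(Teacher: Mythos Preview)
Your approach is essentially the same as the paper's: a Bianchi--Egnell contradiction argument combining compactness of minimizing sequences with a local spectral analysis based on Kwong's non-degeneracy result, followed by a conversion from $H^1$-distance to the norms in \eqref{rem1}--\eqref{rem2}. The paper organizes this differently, first proving an $H^1$-stability result (Theorem~\ref{berem}) and then deducing Theorem~\ref{becor} in a few lines via Lemma~\ref{lip}, but the substance is the same.

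Two points where the paper's execution is cleaner than your sketch. First, your spectral step takes an unnecessary detour: you propose to analyze the single negative eigenvalue of $\mathcal L$ and check separately that the rank-one correction neutralizes it, including a computation on the dilation mode $\eta_s$. The paper bypasses all of this by observing that $\tilde Q$ is precisely the Hessian of the constrained functional at the minimizer $Q$, so $\tilde Q\geq 0$ is immediate; the only work is then identifying the kernel, which is where Kwong enters. The dilation mode is not a zero mode here (there is no scaling symmetry once $\|\psi\|_2=1$ is fixed), so that computation is superfluous. Second, your assertion that ``the right-hand side being uniformly bounded'' needs a short argument: a priori $\|\psi_n\|_q$ could be unbounded, and you must rule this out by noting that if $\|\psi_n\|_q\to\infty$ then, since $\theta<1$ in \eqref{gns}, the deficit grows like $\|\nabla\psi_n\|_2^{2}$ while the right-hand side grows only like $\|\psi_n\|_q^2\lesssim\|\nabla\psi_n\|_2^{2\theta}$, so the ratio would tend to infinity rather than zero. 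Also note that the constraint $\|\psi_n\|_2=1$ gives $\int\phi\,\eta_n=-\tfrac12\|\eta_n\|_2^2$, which is only \emph{approximately} zero; the paper handles this by projecting off the kernel and tracking the $O(\|\eta_n\|^3)$ error explicitly.
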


Accepting Theorems \ref{remainder} and \ref{becor} for the moment, we now explain how they can be used to derive the stability bound in Theorem \ref{main}.

\begin{proof}[Proof of Theorem \ref{main}]
We first consider the case $2 < q < 4$.  By (\ref{rem2}) and (\ref{hlow2}),
\begin{eqnarray}\mathcal{E}[\psi] +  \mathcal H[\psi,U]  + \mathcal C_{\gamma,d} &\geq&
 c_{q,d}  \inf_{\phi\in\mathcal G}\left( \|\phi\|_q^2  \left\| \frac{|\psi|^{2}}{\|\psi\|_q^{2}} - \frac{|\phi|^{2}}{\|\phi\|_q^{2}} \right\|_{q/2}^{2}  
 + \left| \|\psi\|_q - \|\phi\|_q \right|^2\right)\nonumber\\
 &+& \frac{q-2}{8} \|\psi\|_q^2 \left\Vert  \frac{|\psi|^{2}}{\|\psi\|_q^{2}} - U^{2/(q-2)}\right\Vert_{q/2}^2\ .\nonumber
 \end{eqnarray}
 Then, since
 $$\|\psi\|_q^2 \geq \frac12 \|\phi\|_q^2 - \left(\|\psi\|_q - \|\phi\|_q\right)^2\ ,$$
 and since ${\displaystyle   \left\Vert  \frac{|\psi|^{2}}{\|\psi\|_q^{2}} - U^{2/(q-2)}\right\Vert_{q/2} \leq 2}$,
 there exists a positive constant $ \tilde c_{q,d}$ such that
 \begin{eqnarray}
 \mathcal{E}[\psi] +  \mathcal H[\psi,U]  + \mathcal C_{\gamma,d}  
&\geq& \tilde c_{q,d} \inf_{\phi\in\mathcal G}\left( \|\phi\|_q^2 \left( \left\| \frac{|\psi|^{2}}{\|\psi\|_q^{2}} - \frac{|\phi|^{2}}{\|\phi\|_q^{2}} \right\|_{q/2}^{2} 
+ \left\Vert  \frac{|\psi|^{2}}{\|\psi\|_q^{2}} - U^{2/(q-2)}\right\Vert_{q/2}^2\right) \right) \nonumber\\
&\geq& \frac{\tilde c_{q,d}}{2} \inf_{\phi\in\mathcal G}\left( \|\phi\|_q^2
\left\Vert  \frac{|\phi|^{2}}{\|\phi\|_q^{2}} - U^{2/(q-2)}\right\Vert_{q/2}^2\right) \ .\nonumber
 \end{eqnarray}
Finally, we use the fact that $\|\phi\|_q$ is a constant depending on $q$ and $d$, but not on $\phi\in\mathcal G$. (Indeed, by a virial-type theorem $\|\phi\|_q$ can be expressed in terms of $\mathcal C_{\gamma,d}$.) This proves the stability bound for $2<q<4$. The case $q\geq 4$ is similar. 
\end{proof}

To summarize the content of this section, we have reduced the proof of Theorem \ref{main} to the proofs of Theorems \ref{remainder} and \ref{becor}. Those will be given in Sections \ref{sec:holder} and \ref{sec:gns}, respectively.


\section{Remainder in H\"older's inequality and other consequences of uniform convexity}\label{sec:holder}

\subsection{Stability for H\"older's inequality}

In this section,
in contrast to the rest of this paper,
we  work with \emph{complex-valued} functions on a general measure space  $(X,{\rm d}\mu)$.
Theorem \ref{holder} gives a stability estimate for H\"older's inequality which, to the best to our knowledge, is new. A recent theorem of Aldaz \cite{Al} gives  a lower bound on $\|f\|_p\|g\|_{p'}  - \int_X|f| |g|{\rm d}\mu$
in terms of the $L^2$ distance between $(|f|/\|f\|_p)^{p/2}$ and  $(|g|/\|g\|_{p'})^{p'/2}$. Our bound takes into account differences of the phase.
Though Aldaz's bound would have sufficed for many of our applications -- when we consider potentials of a single sign --  we discovered it only after the first version of this paper  was completed. The paper was brought to our attention by P. Sosoe to whom we are grateful.

First we define the {\it duality map} ${\mathcal
D}_p$ on functions 
from $L^p$ to the unit sphere in $L^{p'}$:
$${\mathcal D}_p(f) (x)=
\|f\|_p^{1-p} |f|^{p-2}(x)\overline{f(x)}
$$
The map has the property that  $\int_X {\mathcal D}_p(f) f{\rm d}\mu  =
\|f\|_p$, and ${\mathcal D}_p(f)$ is 
the unique unit vector in the dual space $L^{p'}(X,\mu)$ to $L^p(X,\mu)$
that has this property. Here and below, $p'$ is related to $p$ by $1/p+1/p' = 1$.

\begin{theorem}[H\"older's inequality with remainder] \label{holder}
Let $p\geq 2$. Let $f$ be a unit vector in $L^p(X,\mu)$, and let $g$ be a unit vector in $L^{p'}(X,\mu)$. Then 
we have both
\begin{equation}\label{main1}
\left|\int_X fg \,{\rm d}\mu\right| \leq 1 -  \frac{p'-1}{4}\|{\mathcal D}_p(f) -e^{i\theta}g\|_{p'}^2\ ,
\end{equation}
and  
\begin{equation}\label{main2}
\left|\int_X fg \,{\rm d}\mu\right| \leq 1 -  \frac{1}{p\ 2^{p-1}}\| e^{i\theta}f - {\mathcal D}_{p'}(g)\|_{p}^{p}\ .
\end{equation}
where  $\theta \in [0,2\pi)$ is such that  $e^{i\theta}\int_X fg  \,{\rm d}\mu$ is positive.
The exponents $2$ and $p$ on the right sides of \eqref{main1} and \eqref{main2} are best possible. 
\end{theorem}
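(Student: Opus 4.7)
My plan is to attack both \eqref{main1} and \eqref{main2} by the same two-step device: first replace $g$ by $e^{i\theta}g$ so that $\int fg\,d\mu$ is real and positive, then play the value $|\int fg\,d\mu|$ off against the maximal value $1$, which is attained by $\mathcal{D}_p(f)$ via $\int f \mathcal{D}_p(f)\,d\mu = \|f\|_p = 1$. The trick is to take the midpoint: if $u$ and $v$ are both unit vectors in $L^{p'}(X,\mu)$ with $\int fu\,d\mu = 1$ and $\int fv\,d\mu = |\int fg\,d\mu|$, then H\"older applied to the pairing $\int f \cdot \tfrac{u+v}{2}\,d\mu$ gives
\[
\tfrac{1}{2}\bigl(1 + |\textstyle\int fg\,d\mu|\bigr) \leq \bigl\| \tfrac{u+v}{2} \bigr\|_{p'}\,.
\]
Rearranged, this says $|\int fg\,d\mu| \leq 1 - 2\bigl(1 - \|\tfrac{u+v}{2}\|_{p'}\bigr)$, so any bound on the modulus of convexity of the $L^{p'}$-norm at the pair $(u,v)$ transfers directly into a remainder in H\"older.

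To prove \eqref{main1}, I take $u = \mathcal{D}_p(f)$, $v = e^{i\theta}g$, and invoke the $2$-uniform convexity of $L^{p'}$ valid for $1 < p' \leq 2$, namely
\[
\bigl\| \tfrac{u+v}{2} \bigr\|_{p'}^2 + (p'-1)\bigl\| \tfrac{u-v}{2} \bigr\|_{p'}^2 \leq \tfrac{1}{2}\bigl(\|u\|_{p'}^2 + \|v\|_{p'}^2\bigr) = 1\,,
\]
which is the Ball--Carlen--Lieb inequality (or a direct consequence of Hanner's inequality) and holds for complex-valued functions. Taking square roots and using $\sqrt{1-t} \leq 1 - t/2$ yields $\|\tfrac{u+v}{2}\|_{p'} \leq 1 - \tfrac{p'-1}{8}\|u-v\|_{p'}^2$; inserting this into the H\"older midpoint bound gives exactly the claimed constant $(p'-1)/4$.

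To prove \eqref{main2}, I instead take $u = \mathcal{D}_{p'}(g)$ and $v = e^{i\theta}f$, both unit vectors in $L^p$ with $p \geq 2$, and pair them against $g$ rather than against $f$. Now I use the classical Clarkson inequality
\[
\bigl\| \tfrac{u+v}{2} \bigr\|_p^p + \bigl\| \tfrac{u-v}{2} \bigr\|_p^p \leq \tfrac{1}{2}\bigl(\|u\|_p^p + \|v\|_p^p\bigr) = 1\,,
\]
followed by $(1 - s)^{1/p} \leq 1 - s/p$ for $s \in [0,1]$, which yields $\|\tfrac{u+v}{2}\|_p \leq 1 - \tfrac{1}{p\cdot 2^p}\|u-v\|_p^p$. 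Feeding this into the H\"older midpoint bound gives precisely the constant $1/(p\cdot 2^{p-1})$.

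The one slightly delicate point is the sharpness of the exponents $2$ and $p$ on the right-hand sides; I will check this by perturbing the equality case, taking $g = \mathcal{D}_p(f) + \epsilon h$ (resp.\ $f = \mathcal{D}_{p'}(g) + \epsilon h$) for a suitable $h$ orthogonal to the equality manifold, and computing both sides to leading order in $\epsilon$. The second-order expansion of $\|\cdot\|_{p'}^{p'}$ about a unit vector near a point where $p' \leq 2$ produces the quadratic remainder of order $\epsilon^2$, while the analogous expansion of $\|\cdot\|_p^p$ for $p\geq 2$ (where the second derivative of $|t|^p$ vanishes at the zeros of $u-v$) genuinely yields a remainder of order $\epsilon^p$, showing that neither exponent can be improved. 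The main substantive step is really the appeal to the sharp $2$-uniform convexity of $L^{p'}$ for $1<p'\leq 2$ with constant $p'-1$; this is the strongest ingredient and the source of the optimal prefactor.
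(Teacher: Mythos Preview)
Your argument for \eqref{main1} and \eqref{main2} is correct and is exactly the paper's approach: the midpoint H\"older estimate $\tfrac12(1+|\int fg|)\le \|\tfrac{u+v}{2}\|$ combined with $2$-uniform convexity in $L^{p'}$ for \eqref{main1} and the easy Clarkson inequality in $L^p$ for \eqref{main2}, with the same constants. For the sharpness of the exponent $p$ in \eqref{main2} your perturbation sketch has a small slip---the relevant degeneracy is at the zeros of $\mathcal{D}_{p'}(g)$ (equivalently of $g$), not of $u-v$---and the paper instead writes down the explicit example $f=1$, $g=(1-\delta)^{-1/p'}\mathbf{1}_{[0,1-\delta]}$ on $[0,1]$, which realizes precisely that scenario.
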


\begin{remark}
When estimating a remainder in H\"older's inequality, there are always two
choices:  One can apply the appropriate duality map 
to either of the two functions. Generally, one choice will be better than the other in that  the exponent in (\ref{main1}) is always $2$, 
while the exponent $p$ in (\ref{main2}) is larger than $2$ except when $p=p'
=2$.  This will be illustrated in the proof of Theorem~\ref{remainder},
which we give first, before proving Theorem \ref{holder}.
\end{remark}

\begin{proof}[Proof of Theorem \ref{remainder}]
We recall that  the functional $\mathcal H[\psi,U]$, defined in (\ref{hdef}), is non-negative
due to H\"older's inequality for $L^{q/2}$ and $L^{q/(q-2)}$.   

When $q\geq 4$, $q/(q-2) \leq 2$, and we may apply (\ref{main1})
with $g := U$ and $f  := |\psi|^2/\|\psi\|_q^2$. Then since 
${\mathcal D}_{q/2}(f) = |\psi|^{q-2}/\|\psi\|_q^{q-2}$,  we obtain (\ref{hlow1}).

When $2\leq q \leq 4$, $1 \leq q/2 \leq 2$, and we may apply (\ref{main1}) with $f := U$ and $g := 
|\psi|^2/\|\psi\|_q^2$. Then since 
${\mathcal D}_{q/(q-2)}(f) = U^{2/(q-2)}$,  we obtain (\ref{hlow2}).
\end{proof}

\begin{proof}[Proof of Theorem \ref{holder}]
Recall that the $L^{p}$ spaces, $1 < p < \infty$, are uniformly convex, and specifically, for $1< p \leq 2$ one has that for any
two unit vectors 
$u,v\in L^p(X,{\rm d}\mu)$, 
\begin{equation}\label{step2}
\left\Vert \frac{u+v}{2}\right\Vert_p  \leq 1 - \frac{p-1}{8}\|u-v\|_p^2\ ,
\end{equation}
which is the expression of $2$-uniform convexity of the $L^p$ norms, $1 < p \leq 2$. For the constant, see \cite{BCL}.

Similarly,  as a direct consequence of the `easy Clarkson inequality'
\cite[eq. (2.2) and after (2.4)]{BCL}, for
$2\leq p < \infty$, one has
for any
two unit vectors 
$u,v\in L^p(X,{\rm d}\mu)$, 
\begin{equation}\label{step2A}
\left\Vert \frac{u+v}{2}\right\Vert_p  \leq 1 - \frac{1}{p2^{p}}\|u-v\|_p^p\ ,
\end{equation}
and again \cite{BCL} may be consulted for proofs and references.

Now take $f$ to be a unit vector in $L^p$ and $g$ a unit vector in $L^{p'}$.   Let $\theta$ be such that $e^{i\theta} \int_X fg \,{\rm d}\mu$ is positive. 
By H\" older's inequality, 
\begin{equation}\label{step1}
1 + e^{i\theta}\int_X fg \,{\rm d}\mu = \int_X f({\mathcal D}_p(f) + e^{i\theta}g) \,{\rm d}\mu \leq \|{\mathcal D}_p(f)+ e^{i\theta}g\|_{p'}\ .
\end{equation}
Since both ${\mathcal D}_p(f)$ and $e^{i\theta}g$ are unit vectors in $L^{p'}(X,{\rm d}\mu)$, when   $2\leq p < \infty$, we may combine
 (\ref{step2}) and (\ref{step1}) to obtain
$$
\frac12 +  \frac12 \left| \int_X fg \,{\rm d}\mu  \right| \leq 1 - \frac{p'-1}{8}\|{\mathcal D}_p(f) -e^{i\theta}g\|_{p'}^2\ ,
$$
which reduces to (\ref{main1}).
Likewise, for $1< p \leq 2$, by (\ref{step2A}) and (\ref{step1}),
$$
\frac12 +  \frac12\left|\int_X fg \,{\rm d}\mu \right|  \leq 1 - \frac{1}{p'2^{p'}}\|{\mathcal D}_p(f) -g\|_{p'}^{p'}\ ,
$$
 which, upon interchanging $f$ with $g$, and $p$ with $p'$, reduces to (\ref{main2}). 

It remains to show that the exponents in these inequalities are sharp. This is evident for (\ref{main1}) since in a finite dimension setting where the norms are twice continuously differentiable, it is evident that the exponent $2$ is best possible. Hence we focus on (\ref{main2}).

Let $X= [0,1]$ and let $\mu$ be Lebesgue measure. 
For a small $\delta>0$, define
$$
f := 1 \qquad{\rm and}\qquad 
g := \begin{cases} (1-\delta)^{-1/p'} & 0 \leq x \leq 1-\delta\\
0 &1-\delta < x \leq 1\end{cases} \ .
$$
Then $f$ and $g$ are unit vectors in $L^p(X,\mu)$ and $L^{p'}(X,\mu)$ respectively.
$$
\int_X fg \,{\rm d}\mu = (1-\delta)^{1/p} =  1 - \frac{1}{p}\delta + {\mathcal O}(\delta^2)\ ,
$$
while a simple computation yields
$$
\|f- {\mathcal D}_{p'}(g)\|_p = \left(\left(\left(1-\delta\right)^{-1/p}-1\right)^p \left(1-\delta\right) +\delta\right)^{1/p} > \delta^{1/p}\ .
$$
Hence, for this choice of $f$ and $g$,
${\displaystyle 
 \int_X fg\,{\rm d}\mu \geq 1 - \frac{1}{p}\|f - {\mathcal D}_{p'}(g)\|_p^p}$
 for sufficiently small $\delta>0$.
\end{proof}


\subsection{Further consequences of uniform convexity}

The uniform convexity inequalities have one more consequence that is useful for us, since \begin{equation}\label{double}
\frac{|f|^{q-2}}{\|f\|_q^{q-2}} -  \frac{|g|^{q-2}}{\|g\|_q^{q-2}} =
{\mathcal D}_{q/2}\left( \frac{|f|^2}{\|f\|_q^2}\right) - 
{\mathcal D}_{q/2}\left(\frac{|g|^2}{\|g\|_q^2}\right)\ .
\end{equation}
The  quantity on the left side appears in (\ref{rem1}), our stability bound
for the GNS inequality. 
The next lemma  shows that for $q/2\geq 2$, the duality map is Lipschitz continuous, which allows us
estimate the $L^{q/(q-2)}$ norm of the left side of (\ref{double}) in terms
of $\|f - g\|_q$; see Lemma~\ref{lip} below. 
Then in Section~\ref{sec:gns}, we shall prove an $L^q$ stability theorem for the GNS inequality, which then
leads to the more specialized bound in Theorem~\ref{becor}.

\begin{lemma}[H\"older continuity of the duality map] \label{modcont}
Let $f,g \in L^p(X,\mu)$. Then, for $p \geq 2$, 
\begin{equation}\label{ng2}
\|{\mathcal D}_p(f) - {\mathcal D}_p(g)\|_{p'} \leq 4(p-1)\ \frac{\|f-g\|_p}{\|f\|_p+\|g\|_p} \ ,
\end{equation}
and for $1 < p \leq 2$, 
\begin{equation}\label{ng1}
\|{\mathcal D}_p(f) - {\mathcal D}_p(g)\|_{p'} \leq 2\left(p'\frac{\|f-g\|_p}{\|f\|_p+\|g\|_p}\right)^{p-1} \ .
\end{equation}
\end{lemma}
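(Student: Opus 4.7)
The plan has two parts: a reduction to unit vectors in $L^p$ exploiting the $0$-homogeneity of ${\mathcal D}_p$, and a pointwise regularity estimate for the scalar map $\phi_p(z) := |z|^{p-2}\bar z$ on $\C$.

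For the reduction, I would set $u := f/\|f\|_p$ and $v := g/\|g\|_p$, noting that ${\mathcal D}_p(f) = {\mathcal D}_p(u)$ and ${\mathcal D}_p(g) = {\mathcal D}_p(v)$ since ${\mathcal D}_p$ is invariant under multiplication by positive scalars. The geometric inequality
$\|u-v\|_p \leq 4\|f-g\|_p/(\|f\|_p+\|g\|_p)$
follows by taking the $L^p$-norm of the identity
$f-g = \tfrac12(\|f\|_p+\|g\|_p)(u-v) + \tfrac12(\|f\|_p-\|g\|_p)(u+v)$
and applying $\|u+v\|_p \leq 2$ together with the reverse triangle bound $|\|f\|_p-\|g\|_p| \leq \|f-g\|_p$.

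For the regularity of $\phi_p$, viewing it as a map $\R^2 \to \R^2$ and writing $z = re^{i\theta}$ gives $\phi_p(z) = r^{p-1}e^{-i\theta}$, from which a direct computation shows that the operator norm of its differential at $z \neq 0$ equals $\max(p-1,\,1)\,|z|^{p-2}$. Integrating along the straight segment from $b$ to $a$ yields the pointwise bound $|\phi_p(a)-\phi_p(b)| \leq (p-1)\max(|a|,|b|)^{p-2}|a-b|$ when $p \geq 2$. For $1 < p < 2$ the differential blows up at the origin, so I would argue by a dichotomy: if $|a-b| \geq \tfrac12\max(|a|,|b|)$ the crude bound $|\phi_p(a)|+|\phi_p(b)| = |a|^{p-1}+|b|^{p-1} \leq 2^p|a-b|^{p-1}$ suffices, while if $|a-b| < \tfrac12\max(|a|,|b|)$ the segment stays bounded away from the origin and the MVT-based integral converges, again yielding $|a-b|^{p-1}$ control. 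The conclusion is $|\phi_p(a)-\phi_p(b)| \leq c_p\,|a-b|^{p-1}$ for $1 < p \leq 2$.

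Inserting these pointwise bounds into $L^{p'}$ and applying H\"older: for $p \geq 2$, the exponents $(p-1)/(p-2)$ and $p-1$ are conjugate and make $\int\max(|u|,|v|)^{p}\,d\mu \leq 2$ and $\|u-v\|_p^p$ both appear, yielding $\|{\mathcal D}_p(u)-{\mathcal D}_p(v)\|_{p'} \leq C_p\|u-v\|_p$ on the unit sphere. For $1 < p \leq 2$ the identity $(p-1)p' = p$ lets the pointwise bound integrate directly to $\|{\mathcal D}_p(u)-{\mathcal D}_p(v)\|_{p'} \leq c_p'\|u-v\|_p^{p-1}$. In either case, substituting the reduction bound on $\|u-v\|_p$ yields (\ref{ng2}) and (\ref{ng1}) after consolidating constants. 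The main obstacle will be tracking the numerical factors through both steps in order to recover precisely $4(p-1)$ and $2(p')^{p-1}$; for $1 < p \leq 2$ this requires tuning the dichotomy threshold carefully. As a sanity check, $p = 2$ gives ${\mathcal D}_2(f) = \bar f/\|f\|_2$, the regularity step is trivial, and the reduction alone furnishes (\ref{ng2}) with constant $4 = 4(p-1)|_{p=2}$.
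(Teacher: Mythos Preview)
Your route is valid but genuinely different from the paper's, and it does not recover the stated constants.

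The paper never normalizes and never touches the pointwise map $z\mapsto |z|^{p-2}\bar z$. Instead it exploits the algebraic identity
\[
\int_X ({\mathcal D}_p(f)+{\mathcal D}_p(g))(f+g)\,d\mu
= 2(\|f\|_p+\|g\|_p) - \int_X ({\mathcal D}_p(f)-{\mathcal D}_p(g))(f-g)\,d\mu,
\]
bounds the left side by $\|{\mathcal D}_p(f)+{\mathcal D}_p(g)\|_{p'}(\|f\|_p+\|g\|_p)$ and the last integral by $\|{\mathcal D}_p(f)-{\mathcal D}_p(g)\|_{p'}\|f-g\|_p$ (both H\"older), and rearranges to
\[
1-\left\Vert \frac{{\mathcal D}_p(f)+{\mathcal D}_p(g)}{2}\right\Vert_{p'}
\le \left\Vert \frac{{\mathcal D}_p(f)-{\mathcal D}_p(g)}{2}\right\Vert_{p'}\,
\frac{\|f-g\|_p}{\|f\|_p+\|g\|_p}\,.
\]
Since ${\mathcal D}_p(f)$ and ${\mathcal D}_p(g)$ are unit vectors in $L^{p'}$, the uniform convexity inequalities (\ref{step2}) and (\ref{step2A}) lower-bound the left side by a power of $\|{\mathcal D}_p(f)-{\mathcal D}_p(g)\|_{p'}$, and one cancels and solves. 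The exact constants $4(p-1)$ and $2(p')^{p-1}$ drop out via $(p'-1)(p-1)=1$.

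Your argument --- normalize, then the pointwise bound $|\phi_p(a)-\phi_p(b)|\le (p-1)\max(|a|,|b|)^{p-2}|a-b|$, then H\"older with exponents $(p-1)/(p-2)$ and $p-1$ --- is correct in structure and gives the right exponents. But tracking constants, for $p\ge 2$ your H\"older step produces a factor $\bigl(\int\max(|u|,|v|)^p\bigr)^{(p-2)/p}\le 2^{(p-2)/p}$, so after the reduction you land on $4(p-1)\cdot 2^{(p-2)/p}$, strictly larger than $4(p-1)$ for $p>2$ (and off by a factor approaching $2$ as $p\to\infty$). A similar loss enters for $1<p<2$ through the dichotomy constant $c_p$. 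Your suspicion that the numerical factors are the obstacle is therefore well founded: your method proves the lemma with \emph{some} constant depending only on $p$, which suffices for every downstream application in the paper, but the duality-plus-uniform-convexity trick is both shorter and delivers the constants as stated.
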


\begin{proof} By H\"older's inequality,
\begin{eqnarray}
\|{\mathcal D}_p(f) + {\mathcal D}_p(g)\|_{p'} \left(\|f\|_p+\|g\|_p \right)
&\geq& \|{\mathcal D}_p(f) + {\mathcal D}_p(g)\|_{p'} \|f+g\|_p \nonumber\\
&\geq& \int_X ({\mathcal D}_p(f) + {\mathcal D}_p(g))(f+g) \,{\rm d}\mu \nonumber\\
&=& 2(\|f\|_p + \|g\|_p) - \int_X ({\mathcal D}_p(f) - {\mathcal D}_p(g))(f-g)\,{\rm d}\mu \nonumber\\
&\ge&  2 (\|f\|_p + \|g\|_p) -  \|{\mathcal D}_p(f) - {\mathcal D}_p(g)\|_{p'} \|f-g\|_p \ .\nonumber
\end{eqnarray}
Thus,
\begin{equation*}
1 - \left\Vert \frac{  {\mathcal D}_p(f) + {\mathcal D}_p(g)}{2}  \right\Vert_{p'} \leq  
 \left\Vert \frac{{\mathcal D}_p(f) - {\mathcal D}_p(g)}{2}\right\Vert_{p'} \, \frac{\|f-g\|_p}{\|f\|_p+\|g\|_p} \ .
 \end{equation*}
Now apply the uniform convexity inequalities to the left side and simplify.
\end{proof} 

\begin{lemma}\label{lip}
Let $f,g\in L^q(X,\mu)$   Then for all $q\geq 2$, 
\begin{equation}\label{qo2est}
\max\{\, \|f\|_q\ ,\ \|g\|_q\} \left\Vert \frac{|f|^2}{\|f\|_q^2} - \frac{|g|^2}{\|g\|_q^2}\right\|_{q/2} 
\leq  4 \|f-g\|_q\ .
\end{equation}
Moreover, if $q\geq 4$, then
\begin{equation}\label{bqest}
\max\{\, \|f\|_q\ ,\ \|g\|_q\} \left\| \frac{|f|^{q-2}}{\|f\|_q^{q-2}} - \frac{|g|^{q-2}}{\|g\|_q^{q-2}} \right\|_{q/(q-2)}
\leq 4(q-2)\|f-g\|_q\,.
\end{equation}
\end{lemma}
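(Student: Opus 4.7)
The plan is to prove \eqref{qo2est} directly, by an algebraic decomposition combined with H\"older's inequality and the reverse triangle inequality, and then to deduce \eqref{bqest} essentially for free by combining the identity \eqref{double} with the Lipschitz bound on the duality map furnished by Lemma \ref{modcont}.

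For \eqref{qo2est}, I would assume without loss of generality that $\|f\|_q \geq \|g\|_q$ and set $a=\|f\|_q$, $b=\|g\|_q$. The key algebraic step is the decomposition
\begin{equation*}
\frac{|f|^2}{a^2} - \frac{|g|^2}{b^2} = \frac{|f|^2 - |g|^2}{a^2} + \frac{b^2 - a^2}{a^2 b^2}\, |g|^2,
\end{equation*}
which isolates a ``numerator'' and a ``normalization'' contribution. I would bound the first term by factoring $|f|^2 - |g|^2 = (|f|-|g|)(|f|+|g|)$ and applying H\"older's inequality $L^{q/2}\supset L^q\cdot L^q$ together with the pointwise bound $\bigl||f|-|g|\bigr|\leq |f-g|$, yielding at most $(a+b)\|f-g\|_q/a^2$. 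For the second term I would use $\||g|^2\|_{q/2}=b^2$, factor $|a^2-b^2|=(a+b)|a-b|$, and apply the reverse triangle inequality $|a-b|\leq\|f-g\|_q$ to obtain the identical bound $(a+b)\|f-g\|_q/a^2$. Adding the two estimates and using $a+b\leq 2a$ yields $4\|f-g\|_q/a$, and multiplication by $a$ is exactly \eqref{qo2est}.

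Once \eqref{qo2est} is in hand, \eqref{bqest} follows with very little additional work. When $q\geq 4$ we may take $p:=q/2\geq 2$ in Lemma \ref{modcont} and apply it to the unit vectors $u=|f|^2/\|f\|_q^2$ and $v=|g|^2/\|g\|_q^2$ in $L^{q/2}$. Since $(q/2)'=q/(q-2)$, $\|u\|_{q/2}+\|v\|_{q/2}=2$, and $4(q/2-1)=2(q-2)$, the lemma gives
\begin{equation*}
\|\mathcal{D}_{q/2}(u)-\mathcal{D}_{q/2}(v)\|_{q/(q-2)}\leq (q-2)\, \|u-v\|_{q/2}.
\end{equation*}
The identity \eqref{double} rewrites the left side as the $L^{q/(q-2)}$-norm appearing in \eqref{bqest}, and applying \eqref{qo2est} to the right side after multiplying through by $\max\{\|f\|_q,\|g\|_q\}$ delivers \eqref{bqest} with constant $4(q-2)$.

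No step in this plan is genuinely subtle; the only real chore is the bookkeeping in the decomposition used to prove \eqref{qo2est}. The second inequality is essentially a transcription given the tools already on the table, so I do not anticipate any substantive obstacle.
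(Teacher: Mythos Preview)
Your proposal is correct and follows essentially the same route as the paper: the same algebraic decomposition for \eqref{qo2est}, the same H\"older/factoring bound on $|f|^2-|g|^2$, and the same use of Lemma~\ref{modcont} with $p=q/2$ together with \eqref{double} to pass from \eqref{qo2est} to \eqref{bqest}. The only cosmetic difference is that the paper bounds the normalization term via the reverse triangle inequality in $L^{q/2}$ (so that both pieces are controlled by $\||f|^2-|g|^2\|_{q/2}$ before the final H\"older step), whereas you bound the two pieces separately; either route gives the constant $4$.
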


\begin{proof}
Without loss of generality, we may assume that $\|f\|_q \geq \|g\|_q$. Note that
$$
\frac{|f|^2}{\|f\|_q^2} - \frac{|g|^2}{\|g\|_q^2} = \frac{|f|^2- |g|^2 }{\|f\|_q^2} + \frac{|g|^2}{\|f\|_q^2\|g\|_q^2} \left(\|g\|_q^2 - \|f\|_q^2\right)\ .
$$
Taking the $L^{q/2}$ norm of both sides, and using the triangle inequality, we obtain
$$
\left\Vert \frac{|f|^2}{\|f\|_q^2} - \frac{|g|^2}{\|g\|_q^2}\right\Vert_{q/2} 
\leq  \frac{2}{\|f\|_q^2} \left\||f|^2-|g|^2\right\|_{q/2} 
= \frac{2}{ \max\{\, \|f\|_q^2\ ,\ \|g\|_q^2\}} \left\||f|^2-|g|^2\right\|_{q/2} .
$$
Then, by the Cauchy-Schwarz inequality, 
$$
\left\||f|^2-|g|^2\right\|_{q/2} \leq  \left\||f|-|g|\right\|_q \left\||f|+|g|\right\|_q 
\leq   2\left\|f-g\right\|_q \max\left\{\, \|f\|_q\ ,\ \|g\|_q\right\}\ .
$$
Combining the last two estimates yields (\ref{qo2est}).

Next, for $q\geq 4$, apply (\ref{ng2})   with $p =q/2$ and with $|f|^2/\|f\|_q^2$ in place of $f$, and $|g|^2/\|g\|_q^2$ in place of $g$ to get
$$
\left\| \frac{|f|^{q-2}}{\|f\|_q^{q-2}} - \frac{|g|^{q-2}}{\|g\|_q^{q-2}} \right\|_{q/(q-2)} 
\leq (q-2)\left\Vert \frac{f^2}{\|f\|_2^2} -   \frac{g^2}{\|g\|_2^2}\right\|_{q/2}\,.
$$
Combining this with (\ref{qo2est}) yields (\ref{bqest})
\end{proof}


\section{A GNS-type inequality}\label{sec:gns0}

\subsection{The GNS minimization problem}

In this section we consider the minimization problem
\begin{equation}
 \label{eq:inf2}
-\mathcal C'_{q,d} = \inf\left\{ \int_{\R^d} |\nabla\psi|^2 \,dx - \left( \int_{\R^d} |\psi|^q \,dx \right)^{2/q} :\ \psi\in H^1(\R^d)\,,\ \int_{\R^d} \psi^2\,dx =1 \right\} \,.
\end{equation}
(Here, and everywhere in the following two sections we only consider \emph{real-valued} functions $\psi$.)

Up to now, $q$ has been tied to $\gamma$ by \eqref{eq:dual}, but in the following two sections $\gamma$ does not appear anymore and $q$ is the natural variable. Its restrictions, which we will assume throughout, are $2<q<\infty$ if $d=1,2$ and $2<q<2d/(d-2)$ if $d\geq 3$. (This corresponds to the conditions on $\gamma$ from Theorems \ref{unique} and \ref{main}.) Since $q$ and not $\gamma$ is the natural variable, we write $\mathcal C'_{q,d}$ instead of $\mathcal C_{\gamma,d}$ in the following two sections. These two numbers coincide if $\gamma$ and $q$ are related by \eqref{eq:dual}.

We shall prove that the infimum in \eqref{eq:inf2} is finite and is attained and that the minimizer is unique (up to translations and a sign). We record these facts in the following theorem. Recall that $\mathcal G$ denotes the set of optimizers in \eqref{eq:inf2}, see \eqref{eq:g}.

\begin{theorem}\label{kwong}
Let $2<q<\infty$ if $d=1,2$ and $2<q<2d/(d-2)$ if $d\geq 3$. There is a radial, strictly decreasing function $Q$ such that
$$
\mathcal G = \left\{ \sigma Q(\cdot -a): \ a\in\R^d\,,\ \sigma=\pm1 \right\} \,.
$$
The function $Q$ satisfies $\int Q^2\,dx = 1$ and
\begin{equation}
\label{eq:eleq}
-\Delta Q - \|Q\|_{q}^{2-q} Q^{q-1} = E Q \,,
\end{equation}
where $E=-\mathcal C_{q,d}'$.
\end{theorem}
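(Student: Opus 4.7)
The plan is to establish, in order, (i) that $\mathcal C'_{q,d}$ is finite, (ii) that a minimizer exists, (iii) that it satisfies the Euler--Lagrange equation \eqref{eq:eleq}, and (iv) that it is unique up to translation and sign. The first three steps use only standard variational technology; the last is the serious one and rests on Kwong's uniqueness theorem.

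For (i), I would apply the Gagliardo--Nirenberg interpolation inequality, valid for our range of $q$: there is a constant $A_{q,d}$ with
$$
\|\psi\|_q^2 \leq A_{q,d} \|\nabla\psi\|_2^{2\theta} \|\psi\|_2^{2(1-\theta)} \,, \qquad \theta = \frac{d(q-2)}{2q} \in (0,1) \,.
$$
Since $2\theta < 2$, Young's inequality under $\|\psi\|_2=1$ yields $\|\psi\|_q^2 \leq \tfrac12 \|\nabla\psi\|_2^2 + K$, hence $\mathcal E[\psi] \geq -K$, and every minimizing sequence $\{\psi_n\}$ has $\|\nabla\psi_n\|_2$ bounded. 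A simple dilation $\psi \mapsto \lambda^{d/2}\psi(\lambda \cdot)$, which preserves $\|\psi\|_2$ and turns $\mathcal E[\psi]$ into $\lambda^2\|\nabla\psi\|_2^2 - \lambda^{d(q-2)/q}\|\psi\|_q^2$, further shows $\mathcal C'_{q,d}>0$ since $d(q-2)/q<2$.

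For (ii), I would replace each $\psi_n$ by its symmetric decreasing rearrangement $\psi_n^*$; this preserves the $L^2$ and $L^q$ norms and does not increase the Dirichlet integral, so the minimizing sequence may be taken radial and nonincreasing. The compact embedding $H^1_{\mathrm{rad}}(\R^d) \hookrightarrow L^q(\R^d)$ (Strauss, for $d \geq 2$; the one-dimensional case follows from the pointwise decay of monotone $H^1$ functions on the half-line) yields a subsequence converging strongly in $L^q$ and weakly in $H^1$ to some $Q$. Weak lower semicontinuity of the Dirichlet integral gives $\mathcal E[Q] \leq -\mathcal C'_{q,d} < 0$, so in particular $\|Q\|_q>0$. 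Weak $L^2$ convergence gives $\|Q\|_2 \leq 1$; if $\|Q\|_2 < 1$ then $Q/\|Q\|_2$ would satisfy $\mathcal E[Q/\|Q\|_2] = \|Q\|_2^{-2} \mathcal E[Q] < \mathcal E[Q] \leq -\mathcal C'_{q,d}$, contradicting the definition of $\mathcal C'_{q,d}$. Thus $\|Q\|_2=1$ and $Q\in\mathcal G$. For (iii), Lagrange multipliers applied to the constraint $\|\psi\|_2^2=1$ produce \eqref{eq:eleq} for some $E\in\R$, and pairing with $Q$ yields $E = \|\nabla Q\|_2^2 - \|Q\|_q^2 = -\mathcal C'_{q,d}$.

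The main obstacle is uniqueness (iv). Given any minimizer $\psi$, also $|\psi|$ minimizes, so we may first assume $\psi\geq 0$; elliptic regularity applied to \eqref{eq:eleq} together with the strong maximum principle then give $\psi>0$ and smooth. A dilation $\psi(x)=\alpha u(\beta x)$ with $\alpha,\beta>0$ suitably chosen reduces \eqref{eq:eleq} to the canonical ground-state equation $-\Delta u + u = u^{q-1}$ with $u\in H^1(\R^d)$ positive, and Kwong's theorem \cite{K}, extending Coffman \cite{C} and McLeod--Serrin \cite{MS}, asserts that this positive $H^1$ solution is unique up to translation. Hence every non-negative minimizer is a translate of a single fixed function, which we may take to coincide with its own rearrangement $Q=Q^*$ (since $Q^*$ is itself a nonnegative minimizer and must be a translate of $Q$); thus $Q$ is radial and strictly decreasing. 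Finally, for a sign-changing minimizer $\psi$, the already-analyzed $|\psi|$ is a strictly positive smooth translate of $Q$, so by connectedness of $\R^d$ the sign of $\psi$ is constant, giving $\psi = \pm Q(\cdot-a)$ and the stated description of $\mathcal G$.
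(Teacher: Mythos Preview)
Your argument is correct in substance but takes a genuinely different route from the paper's. For existence you pass to rearrangements and use the compact embedding $H^1_{\mathrm{rad}}\hookrightarrow L^q$; the paper instead invokes Lieb's translation--compactness theorem (via the $pqr$-lemma). The paper's choice is deliberate: it simultaneously proves that \emph{every} minimizing sequence converges in $H^1$ after translation (Lemma~\ref{befar}), a fact indispensable later for the stability result (Theorem~\ref{berem}) and one your rearrangement route does not supply. For radiality, the paper appeals to moving planes \cite{GNN}, whereas you observe that $Q^*$ is itself a nonnegative minimizer and hence, by the Kwong-based uniqueness you have already established, a translate of $Q$; this is an elegant shortcut, but rearrangement only yields \emph{nonincreasing}, so strict decrease still needs a one-line ODE-uniqueness argument (or a citation to Kwong) that you have not written. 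For definite sign, the paper gives a self-contained variational argument from the strict concavity of $t\mapsto t^{2/q}$ applied to $\|Q_+\|_q^q+\|Q_-\|_q^q$; you instead pass to $|\psi|$ and use the strong maximum principle, which is fine, but your concluding connectedness argument requires $\psi$ itself (not only $|\psi|$) to be continuous --- this follows because any minimizer $\psi$ satisfies its own Euler--Lagrange equation and is smooth by elliptic regularity, a step you should state explicitly.
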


This theorem is (essentially) known. We include a proof for the sake of completeness and in order to set up the notation for the proof of Theorem \ref{main}. The fact that $\mathcal C'_{\gamma,d}$ is finite and is attained is a standard fact from the calculus of variations. While a short proof can be based on the method of symmetric decreasing rearrangement, we will employ the compactness theorem of \cite{Li} which also yields the fact that minimizing sequence converge strongly in $H^1$ (up to translations). This fact will be useful in our proof of Theorem \ref{main}. The deepest part of this theorem, namely the uniqueness for $d\geq 2$, is a celebrated result of Kwong \cite{K}, who extended works in \cite{C,MS}. Note that for $d\geq 2$, the optimizing function $Q$ is not explicitly known. In contrast, for $d=1$ the optimizers where explicitly found by Keller \cite{Ke} by solving the corresponding Euler--Lagrange equation (although Keller did not prove the existence of a minimizer nor discuss the regularity needed for the solution of the Euler--Lagrange equation). The $d=1$ result is also implicitly contained in \cite{N}. 


\subsection{Existence of an optimizer}

We begin by showing that the infimum \eqref{eq:inf2} is finite. This variational problem may be put in a more familiar form by replacing $\psi(x)$ with $\lambda^{d/2}\psi(\lambda x)$ and optimizing over $\lambda$. This leads to
\begin{equation}\label{GNSrel}
\mathcal C_{q,d}'=\theta^{1/(1-\theta)}(1-\theta) \mathcal S_{q,d}^{-1/(1-\theta)} \,,
\end{equation}
where
$$
\mathcal S_{q,d} = \inf_{\psi\in H^1(\R^d)}
\frac{ \left(\int_{\R^d} |\nabla\psi|^2\,dx \right)^\theta \left(\int_{\R^d} \psi^2\,dx \right)^{1-\theta}}{\left(\int_{\R^d} |\psi|^q \,dx \right)^{2/q} } \,, \qquad (d-2)\theta + d(1-\theta)= 2d/q \,.
$$
Thus, the determination of $\mathcal C'_{q,d}$ comes down to the determination of the best constant  $\mathcal S_{q,d}$ in the  Gagliardo--Nirenberg--Sobolev (GNS) inequality
\begin{equation}\label{gns}
\left(\int_{\R^d} |\nabla\psi|^2\,dx \right)^\theta \left(\int_{\R^d} \psi^2\,dx \right)^{1-\theta}
\geq \mathcal S_{q,d}  \left(\int_{\R^d} |\psi|^q \,dx \right)^{2/q} \,,
\end{equation}
which is known to be strictly positive. This shows that $\mathcal C_{q,d}'$ is finite.

Next, we prove existence of a minimizer and compactness of minimizing sequences.

\begin{lemma}\label{befar}
The infimum \eqref{eq:inf2} is attained. Moreover, if $(\psi_n)\subset H^1(\R^d)$ is a minimizing sequence for $-\mathcal C_{q,d}'$, then
$$
\lim_{n\to\infty} \inf_{\phi\in\mathcal G} \| \psi_n-\phi\|_{H^1} = 0 \,.
$$
\end{lemma}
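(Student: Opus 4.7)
The plan is three-fold: (i) bound any minimizing sequence in $H^1$, (ii) extract a nontrivial weak limit after suitable translations using the compactness theorem of \cite{Li}, and (iii) apply the Brezis--Lieb lemma together with a simple amplitude-scaling identity to upgrade to strong $H^1$ convergence, which simultaneously shows the limit is a minimizer.

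For step (i), any minimizing sequence $(\psi_n)$ has $\|\psi_n\|_2=1$ by the constraint. The GNS inequality \eqref{gns} yields $\|\psi_n\|_q^2\leq \mathcal S_{q,d}^{-1}\|\nabla\psi_n\|_2^{2\theta}$ with $\theta=d(q-2)/(2q)\in(0,1)$ under our hypotheses on $q$. Combined with $\mathcal E[\psi_n]\to-\mathcal C_{q,d}'$, this gives $\|\nabla\psi_n\|_2^2-\mathcal S_{q,d}^{-1}\|\nabla\psi_n\|_2^{2\theta}\leq \mathcal E[\psi_n]+o(1)$, and coercivity of $t\mapsto t^2-\mathcal S_{q,d}^{-1}t^{2\theta}$ (since $2\theta<2$) provides an $H^1$ bound on $(\psi_n)$. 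For step (ii), the same identity yields $\|\psi_n\|_q^2 \geq \mathcal C_{q,d}' + o(1)>0$ eventually, so the translation-invariant compactness theorem of \cite{Li} produces $a_n\in\R^d$ and a subsequence along which $\phi_n := \psi_n(\cdot-a_n)\rightharpoonup \psi^*$ weakly in $H^1$ and a.e., with $\psi^*\not\equiv 0$.

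For step (iii), set $\tilde\phi_n := \phi_n-\psi^*$, so $\tilde\phi_n\rightharpoonup 0$ weakly and a.e. The Brezis--Lieb lemma gives, as $n\to\infty$,
\[
\|\phi_n\|_2^2 = \|\psi^*\|_2^2 + \|\tilde\phi_n\|_2^2 + o(1), \qquad \|\nabla\phi_n\|_2^2 = \|\nabla\psi^*\|_2^2 + \|\nabla\tilde\phi_n\|_2^2 + o(1),
\]
and $\|\phi_n\|_q^q = \|\psi^*\|_q^q + \|\tilde\phi_n\|_q^q + o(1)$. Since $2/q<1$, the subadditivity $(a+b)^{2/q}\leq a^{2/q}+b^{2/q}$ implies $\mathcal E[\phi_n]\geq \mathcal E[\psi^*]+\mathcal E[\tilde\phi_n]+o(1)$. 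The amplitude-scaling identity $\mathcal E[\sqrt m\,\psi]=m\,\mathcal E[\psi]$ combined with \eqref{eq:inf2} shows that $\mathcal E[\phi]\geq -\mathcal C_{q,d}'\|\phi\|_2^2$ for every $\phi\in H^1(\R^d)$. Applying this to $\psi^*$ and $\tilde\phi_n$, summing, and letting $n\to\infty$ yields $-\mathcal C_{q,d}'\geq -\mathcal C_{q,d}'$, so each of the above inequalities must be saturated in the limit. The strict inequality $(a^q+b^q)^{2/q}<a^2+b^2$ for $a,b>0$, together with $\psi^*\not\equiv 0$, forces $\|\tilde\phi_n\|_q\to 0$; then $\mathcal E[\tilde\phi_n]=\|\nabla\tilde\phi_n\|_2^2+o(1)\geq 0$, while its limit equals $-\mathcal C_{q,d}'(1-\|\psi^*\|_2^2)\leq 0$, forcing $\|\psi^*\|_2=1$ and $\|\nabla\tilde\phi_n\|_2\to 0$. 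Mass conservation then gives $\|\tilde\phi_n\|_2\to 0$, so $\phi_n\to\psi^*$ strongly in $H^1$ and $\psi^*\in\mathcal G$.

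The main obstacle is that $I(m) := \inf\{\mathcal E[\psi]:\|\psi\|_2^2=m\}$ is merely linear in $m$ (the amplitude scaling above shows $I(m)=m\,I(1)$) rather than strictly sub-additive, so the usual concentration-compactness argument by strict sub-additivity in $m$ does not apply. The resolution described above is to draw the needed strictness from the subadditivity of $x\mapsto x^{2/q}$ at the $L^q$ level, where it does behave strictly.
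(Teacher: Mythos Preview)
Your proof is correct and follows essentially the same route as the paper's: $H^1$-boundedness via GNS, Lieb's compactness up to translations, the Br\'ezis--Lieb splitting, the scaling bound $\mathcal E[\phi]\geq -\mathcal C_{q,d}'\|\phi\|_2^2$, and strict concavity of $x\mapsto x^{2/q}$ to rule out dichotomy. The only step you compress is the hypothesis of Lieb's theorem---the paper uses the $pqr$-theorem to pass from $\|\psi_n\|_q\not\to 0$ (together with the uniform $L^2$ and $L^r$ bounds, $r>q$) to the statement that $(\psi_n)$ does not vanish in measure, whereas you jump directly to the conclusion; this bridge is standard but worth making explicit.
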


\begin{proof}
 It suffices to show that any minimizing sequence has a subsequence which (up to translations) converges strongly in $H^1$ to a minimizer. Thus, let $(\psi_n)\subset H^1(\R^d)$ be such that $\|\psi_n\|=1$ and
$$
\mathcal E[\psi_n] = \int_{\R^d} |\nabla\psi_n|^2 \,dx - \left( \int_{\R^d} |\psi_n|^q \,dx \right)^{2/q} \to -\mathcal C_{q,d}' \,.
$$
It follows from \eqref{gns} that $\|\nabla \psi_n\|_2$ is bounded from above uniformly in $n$. Since $-C_{q,d}'<0$ (as a trial function argument shows), we know that $\|\psi_n\|_q$ is bounded away from zero uniformly in $n$. The uniform boundedness of $\|\nabla\psi_n\|_2$ and $\|\psi_n\|_2$, together with \eqref{gns}, implies that there is an $r>q$ (satisfying $r<2d/(d-2)$ if $d\geq 3$) such that $\|\psi_n\|_r$ is bounded from above uniformly in $n$. By the $pqr$-theorem \cite{FrLiLo} (see also \cite[Ex. 2.22]{LiLo}) we conclude that there are $\delta>0$ and $\epsilon>0$ such that for all $n$,
$$
|\{ |\psi_n|> \epsilon \}| \geq \delta \,.
$$
This means that the sequence $(\psi_n)$ does not tend to zero in measure. Therefore, the assumptions of the Compactness up to Translations Theorem of Lieb \cite{Li} (see also \cite[Thm. 8.10]{LiLo}) are satisfied and we obtain a sequence of vectors $(y_n)\subset\R^d$ such that, after passing to a subsequence if necessary, $\psi_n(\cdot + y_n)$ has a weak limit $\psi$ in $H^1(\R^d)$ with $\psi\not\equiv 0$. By the Rellich--Kondrachov theorem, passing to another subsequence if necessary, we can also assume that $\psi_n(\cdot+y_n)$ converges to $\psi$ pointwise almost everywhere.

We now argue that $\psi_n(\cdot+y_n)$ converges \emph{strongly} in $H^1$ and that $\psi$ is a minimizer. The weak convergence in $H^1$ implies that
$$
1= \|\psi_n\|_2^2 = \|\psi\|_2^2 + \|\psi_n-\psi\|_2^2 + o(1)
$$
and
$$
\|\nabla\psi_n\|_2^2 = \|\nabla\psi\|_2^2 + \|\nabla(\psi_n-\psi)\|_2^2 + o(1) \,.
$$
The almost everywhere convergence together with the Br\'ezis--Lieb lemma \cite{BL1} (see also \cite[Thm. 1.9]{LiLo}) implies that
$$
\|\psi_n\|_q^q = \|\psi\|_q^q + \|\psi_n-\psi\|_q^q + o(1) \,.
$$
Thus, by concavity of $x\mapsto x^{2/q}$,
\begin{equation*}
\limsup_{n\to\infty} \|\psi_n\|_q^2 \leq \|\psi\|_q^2 + \limsup_{n\to\infty} \|\psi_n-\psi\|_q^2 \,.
\end{equation*}
We conclude that
$$
-\mathcal C_{q,d}' = \lim_{n\to\infty} \mathcal E[\psi_n] \geq \mathcal E[\psi] + \liminf_{n\to\infty} \mathcal E[\psi_n-\psi] \,.
$$
On the right side, we bound
\begin{equation*}
\mathcal E[\psi_n-\psi] \geq -\mathcal C_{q,d}' \|\psi_n-\psi\|_2^2  = -\mathcal C_{q,d}' \left(1-\|\psi\|_2^2 +o(1) \right)
\end{equation*}
and, after adding $\mathcal C_{q,d}' \left(1-\|\psi\|_2^2 \right)$ to both sides, we infer that
$$
-\mathcal C_{q,d}' \|\psi\|_2^2 \geq \mathcal E[\psi] \,.
$$
This means that $\psi/\|\psi\|_2$ is an optimizer.

We also conclude that all the inequalities so far must have been asymptotically equalities. That is,
\begin{equation}
\label{eq:strongconv}
\lim_{n\to\infty} \mathcal E[\psi_n-\psi] = -\mathcal C_{q,d}' \left(1-\|\psi\|_2^2 \right)
\end{equation}
and
$$
\limsup_{n\to\infty} \|\psi_n\|_q^2 = \|\psi\|_q^2 + \limsup_{n\to\infty} \|\psi_n-\psi\|_q^2 \,.
$$
From the second equation, together with the \emph{strict} concavity of $x\mapsto x^{2/q}$, we conclude that $\limsup_{n\to\infty} \|\psi_n-\psi\|_q^2 = 0$. Thus,
$$
\liminf_{n\to\infty} \mathcal E[\psi_n-\psi] = \liminf_{n\to\infty} \|\nabla (\psi_n-\psi)\|_2^2 \geq 0 \,.
$$
Comparing this with \eqref{eq:strongconv} we infer that $\|\psi\|_2=1$ and $\lim_{n\to\infty} \|\nabla (\psi_n-\psi)\|_2 = 0$. This proves that $\psi_n(\cdot+y_n)$ converges strongly in $H^1$ to $\psi$, which is an optimizer. The proof of Lemma \ref{befar} is complete.
\end{proof}


\subsection{Uniqueness of the optimizer}

We now complete the proof of Theorem \ref{kwong} by showing how to deduce the uniqueness of an optimizer from Kwong's theorem.

\begin{proof}[Proof of Theorem \ref{kwong}]
Let $Q$ be a minimizer for $-\mathcal C'_{q,d}$ (whose existence we have shown in the proof of Lemma \ref{befar}). We argue that $Q$ does not change sign. Indeed, both $Q_+$ and $Q_-$ belong to $H^1$ and
\begin{align*}
-\mathcal C'_{q,d} = \|\nabla Q \|_2^2 - \|Q\|_q^2 & = \|\nabla Q_+\|_2^2 + \|\nabla Q_-\|_2^2 - \left( \|Q_+\|_q^q +\|Q_-\|_q^q \right)^{2/q} \\
& \geq \|\nabla Q_+\|_2^2 - \|Q_+\|_q^2 + \|\nabla Q_-\|_2^2 - \|Q_-\|_q^2 \\
& \geq -\mathcal C'_{q,d} \|Q_+\|_2^2 -\mathcal C'_{q,d} \|Q_-\|_2^2 = -\mathcal C'_{q,d} \,,
\end{align*}
where we used the concavity of the map $x\mapsto x^{2/q}$ (recall that $q>2$). Since this map is even \emph{strictly} concave, we obtain
$\left( \|Q_+\|_q^q +\|Q_-\|_q^q \right)^{2/q}< \|Q_+\|_q^2 + \|Q_-\|_q^2$ unless one of $Q_+$ and $Q_-$ is identically zero. This strict inequality, however, would contradict the fact that $Q$ is a minimizer and, therefore, we conclude that one of $Q_+$ and $Q_-$ is, indeed, identically zero, that is, $Q$ does not change sign.

Thus, up to multiplying $Q$ by $-1$ we may assume that $Q$ is non-negative. A straightforward computation shows that $Q$ satisfies the Euler--Lagrange equation \eqref{eq:eleq} in the weak sense. Standard regularity results (see, e.g., \cite[Thm. 11.7]{LiLo}) show that $Q$ is a $C^\infty$ function. Moreover, $Q$ is ground state of the Schr\"odinger operator $-\Delta -\|Q\|_q^{2-q} Q^{q-1}$ in $L^2(\R^d)$ and, therefore, it and its derivatives decay exponentially. By the method of moving planes \cite{GNN} one sees that, after a translation if necessary, $Q$ is a radial function and is strictly decreasing with respect to the distance from the origin.

We now prove the \emph{uniqueness} of minimizers for $\mathcal C'_{q,d}$ up to translations. Let $Q$ and $P$ be minimizers. Then, by the above arguments, after a sign change and a translation if necessary, we may assume that $Q$ and $P$ are both symmetric decreasing functions centered around the same origin. It follows from \eqref{eq:eleq} (where the Lagrange multiplier is simply the value $E=-\mathcal C'_{q,d}$) that the functions $\tilde Q(x)=|E|^{-1/(q-2)} Q(x/\sqrt{|E|})/\|Q\|_q$ and $\tilde P(x)=P(x/\sqrt{|E|})/\|P\|_q$ satisfy
$$
-\Delta\tilde Q -\tilde Q^{q-1}= - \tilde Q
\quad\text{and}\quad
-\Delta\tilde P -\tilde P^{q-1}= - \tilde P \,.
$$
Now the uniqueness result of Kwong \cite{K} for $d\geq 2$ (and a straightforward explicit solution for $d=1$) implies that $\tilde Q\equiv\tilde P$, that is, $Q/\|Q\|_q\equiv P/\|P\|_q$. Since both $Q$ and $P$ are normalized in $L^2(\R^d)$ we conclude that $Q\equiv P$, as claimed. This completes the proof of Theorem \ref{kwong}.
\end{proof}


\section{Remainder in a GNS-type inequality}\label{sec:gns}

\subsection{Stability for the GNS minimization problem}

We continue our investigation of the minimization problem \eqref{eq:inf2}. After having established in the previous section the existence and uniqueness (up to translations) of a minimizer, we now investigate the stability question. Our main result in this section is the following theorem.

\begin{theorem}[GNS inequality with remainder]\label{berem}
Let $2<q<\infty$ if $d=1,2$ and $2<q<2d/(d-2)$ if $d\geq 3$. Then there is a constant $c_{q,d}>0$ such that
$$
\int_{\R^d} |\nabla\psi|^2 \,dx - \left( \int_{\R^d} |\psi|^q \,dx \right)^{2/q} \geq -\mathcal C'_{q,d} + c_{q,d} \inf_{\phi\in\mathcal G} \| \psi-\phi\|_{H^1}^2
$$
for all $\psi\in H^1(\R^d)$ with $\int_{\R^d} \psi^2\,dx =1$.
\end{theorem}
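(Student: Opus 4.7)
The plan is to adapt the Bianchi--Egnell compactness strategy of~\cite{BE} to this GNS setting, combining the compactness of minimizing sequences established in Lemma~\ref{befar} with a spectral-gap analysis of the constrained Hessian at $Q$. Assume by contradiction that the inequality fails; then there exists a sequence $(\psi_n)\subset H^1(\R^d)$ with $\|\psi_n\|_2=1$ and
$$
\mathcal E[\psi_n] + \mathcal C'_{q,d}\ \leq\ \tfrac{1}{n} \inf_{\phi\in\mathcal G}\|\psi_n-\phi\|_{H^1}^2.
$$
Since $\|\psi_n\|_{H^1}$ is automatically bounded (via the GNS inequality), the right-hand side tends to zero, so $(\psi_n)$ is a minimizing sequence for~\eqref{eq:inf2}. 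By Lemma~\ref{befar} and the structure of $\mathcal G$ from Theorem~\ref{kwong}, after a translation and possibly a sign change we may assume $\psi_n\to Q$ strongly in $H^1$. A closest point $\phi_n\in\mathcal G$ to $\psi_n$ is then attained and converges to $Q$; after a further translation of $\psi_n$ we may take $\phi_n=Q$. Set $\epsilon_n := \|\psi_n-Q\|_{H^1}\to 0$ and $\eta_n := \epsilon_n^{-1}(\psi_n-Q)$, so $\|\eta_n\|_{H^1}=1$. The minimality of $\phi_n=Q$ with respect to translations gives
$$
\langle \eta_n,\partial_j Q\rangle_{H^1}=0 \qquad (j=1,\ldots,d),
$$
and the $L^2$-normalization yields $\int_{\R^d} Q\eta_n\,dx = -\tfrac{1}{2}\epsilon_n\|\eta_n\|_2^2 = O(\epsilon_n)$.

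Using the Euler--Lagrange equation~\eqref{eq:eleq} to eliminate the linear term, a second-order Taylor expansion of $\mathcal E$ around $Q$ yields
$$
\mathcal E[\psi_n]+\mathcal C'_{q,d}\ =\ \tfrac{1}{2}\epsilon_n^2\,\mathcal Q[\eta_n] + o(\epsilon_n^2),
$$
where the constrained Hessian is
$$
\mathcal Q[\eta] := 2\int_{\R^d}\bigl(|\nabla\eta|^2 + \mathcal C'_{q,d}\,\eta^2 - (q-1)\|Q\|_q^{2-q}Q^{q-2}\eta^2\bigr)\,dx + 2(q-2)\|Q\|_q^{2-2q}\left(\int_{\R^d}Q^{q-1}\eta\,dx\right)^{2}.
$$
Combined with the bound $\inf_{\phi}\|\psi_n-\phi\|_{H^1}^2\leq\epsilon_n^2$, the contradiction hypothesis forces $\mathcal Q[\eta_n]\to 0$ along a unit $H^1$ sequence obeying the above orthogonality conditions.

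The crux---and the main obstacle---is to establish the \emph{coercivity} $\mathcal Q[\eta]\geq\kappa\|\eta\|_{H^1}^2$ on the constraint subspace $\{\eta:\int Q\eta\,dx=0,\ \langle\eta,\partial_j Q\rangle_{H^1}=0\ \forall j\}$, for some $\kappa>0$. The Schr\"odinger operator $L_+ := -\Delta + \mathcal C'_{q,d} - (q-1)\|Q\|_q^{2-q}Q^{q-2}$ has essential spectrum $[\mathcal C'_{q,d},\infty)$ and a finite discrete spectrum below. Differentiating~\eqref{eq:eleq} shows $L_+\partial_j Q = 0$, while the Euler--Lagrange equation gives $L_+Q = -(q-2)\|Q\|_q^{2-q}Q^{q-1}$, hence $\langle L_+Q,Q\rangle = -(q-2)\|Q\|_q^2<0$. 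Thus $L_+$ has exactly one negative eigenvalue, with a simple positive radial ground state (Perron--Frobenius). The crucial non-degeneracy $\ker L_+|_{L^2} = \spa\{\partial_1 Q,\ldots,\partial_d Q\}$ is the \emph{linearized} uniqueness statement and is a well-known consequence of Kwong's theorem~\cite{K} (the same result used in Theorem~\ref{kwong}). With this input, a Weinstein-type decomposition of $\eta$ along the negative eigenvector, the kernel, and their $L^2$-orthogonal complement shows that the rank-one Pohozaev correction $2(q-2)\|Q\|_q^{2-2q}(\int Q^{q-1}\eta)^2$, combined with the constraint $\int Q\eta=0$, dominates the unique negative direction and yields an $L^2$-spectral gap $\mathcal Q[\eta]\geq \kappa_0\|\eta\|_{L^2}^2$. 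Combining with the trivial lower bound $\mathcal Q[\eta]\geq 2\|\nabla\eta\|_2^2 - C\|\eta\|_2^2$ then upgrades this to the desired $H^1$-coercivity by a convex-combination argument. Applied to $\eta_n$---whose orthogonalities hold exactly or up to $O(\epsilon_n)$---this contradicts $\mathcal Q[\eta_n]\to 0$ with $\|\eta_n\|_{H^1}=1$, completing the proof.
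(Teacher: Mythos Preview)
Your proposal is correct and follows essentially the same Bianchi--Egnell strategy as the paper: reduce via compactness (Lemma~\ref{befar}) to functions close to $Q$, Taylor-expand $\mathcal E$ to second order, and invoke Kwong's non-degeneracy to obtain $H^1$-coercivity of the Hessian on the tangent space to the constraint manifold modulo translations. The only organizational difference is that the paper separates the argument into a directly-proved local statement (Lemma~\ref{beclose}) plus a short global contradiction, and it packages the spectral input as Theorem~\ref{kwong2}, observing that $H\geq 0$ is automatic since $H$ is the Hessian at a minimizer---whereas you fold everything into a single contradiction and argue the positivity of $\mathcal Q$ by analyzing $L_+$ and the rank-one correction separately; both routes rest on the same Kwong input and yield the same conclusion.
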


\begin{remark} The  constant $c_{q,d}$ will be provided by a compactness argument and hence its value cannot be computed
or even estimated numerically, unlike $\mathcal C'_{q,d}$. 
Recent work of Dolbeault and Toscani \cite{DT} has has provided a stability bound for certain GNS inequalities with explicit constant, but
a quartic power where one would expect a quadratic power. However, the GNS inequalities they treat are not the ones that we  require here.
It remains an open problem to prove a version of Theorem~\ref{berem} with a computable constant $c_{q,d}$. For other results  on stability
for GNS inequalities and their application to non-linear evolution equations, see \cite{CF}.
\end{remark} 

This theorem, together with Lemma~\ref{lip}, readily yields Theorem~\ref{becor}, as we explain next, before turning to the proof of Theorem~\ref{berem} itself. Since Theorem~\ref{becor} was all that remained to be proven in order to prove our main result,  Theorem~\ref{main}, this will complete our work.

\begin{proof}[Proof of Theorem~~\ref{becor}]  Let $2<q<\infty$ if $d=1,2$ and $2<q<2d/(d-2)$ if $d\geq 3$. 
By the Sobolev Embedding Theorem (see \eqref{gns}),  there is a constant $s_{q,d}$
such that $s_{q,d} \|\phi-\psi\|_q \leq \| \psi-\phi\|_{H^1}$, and by the triangle inequality, $|\|\phi\|_q - \|\psi\|_q| \leq  \|\phi-\psi\|_q$. 
Hence for all   $\psi\in H^1(\R^d)$ with $\int_{\R^d} \psi^2\,dx =1$,
$$
\int_{\R^d} |\nabla\psi|^2 \,dx - \left( \int_{\R^d} |\psi|^q \,dx \right)^{2/q} \geq -\mathcal C'_{q,d} + \frac{c_{q,d}s_{q,d}}{2} \inf_{\phi\in\mathcal G} 
\left(\| \phi -\psi\|_q^2 + |\|\phi\|_q - \|\psi\|_q|^2\right) \,.
$$
Now using (\ref{qo2est}) to bound $\|\phi-\psi\|_q$ from below leads to (\ref{rem2}), and, for $q\geq 4$, using (\ref{bqest})
to bound $\|\phi-\psi\|_q$ from below leads to (\ref{rem1}).
\end{proof}

We now turn to the proof of Theorem \ref{berem}. The key step in the proof is the following `local version' of Theorem \ref{berem}, which establishes the desired inequality under the additional assumption that the distance from the set of optimizers is small. The precise statement reads as follows.

\begin{lemma}\label{beclose}
There are constants $\epsilon>0$ and $c_{q,d}>0$ such that
$$
\int_{\R^d} |\nabla\psi|^2 \,dx - \left( \int_{\R^d} |\psi|^q \,dx \right)^{2/q} \geq -\mathcal C_{q,d}' + c_{q,d} \inf_{\phi\in\mathcal G} \| \psi-\phi\|_{H^1}^2
$$
for all $\psi\in H^1(\R^d)$ with $\int_{\R^d} \psi^2\,dx =1$ and $\inf_{\phi\in\mathcal G} \| \psi-\phi\|_{H^1} \leq \epsilon$.
\end{lemma}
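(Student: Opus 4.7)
The plan is to argue by contradiction, in the spirit of Bianchi--Egnell \cite{BE}. Suppose the conclusion fails; then there is a sequence $\psi_n\in H^1(\R^d)$ with $\|\psi_n\|_2=1$ and $t_n:=\inf_{\phi\in\mathcal G}\|\psi_n-\phi\|_{H^1}\to 0$, yet $\mathcal E[\psi_n]+\mathcal C'_{q,d}=o(t_n^2)$. For $n$ large the map $a\mapsto\|\psi_n-Q(\cdot-a)\|_{H^1}$ is continuous and diverges as $|a|\to\infty$, so (after fixing the sign) a closest point $\phi_n\in\mathcal G$ exists; using translation and sign invariance we may assume $\phi_n=Q$ and write $\psi_n=Q+t_nv_n$ with $\|v_n\|_{H^1}=1$. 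The minimality of the closest point yields the $H^1$-orthogonality $\langle v_n,\partial_jQ\rangle_{H^1}=0$ for $j=1,\dots,d$, while $\|\psi_n\|_2=1$ gives $\langle Q,v_n\rangle_{L^2}=-t_n\|v_n\|_2^2/2$.

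The second step is to Taylor-expand $\mathcal E[\psi_n]$ to second order in $t_n$. Using the Euler--Lagrange equation \eqref{eq:eleq} for $Q$ and integration by parts, the first-order variation reduces to $2t_n E\langle Q,v_n\rangle_{L^2}=t_n^2\mathcal C'_{q,d}\|v_n\|_2^2$. The explicit quadratic terms, combined with a pointwise remainder estimate for $|s+h|^q-|s|^q-q|s|^{q-2}sh-\tfrac{q(q-1)}{2}|s|^{q-2}h^2$ and the Sobolev embedding $H^1\hookrightarrow L^q$, yield
\begin{equation*}
\mathcal E[\psi_n]+\mathcal C'_{q,d}=t_n^2\,\mathcal Q[v_n]+o(t_n^2),
\end{equation*}
where
\begin{equation*}
\mathcal Q[v]:=\langle v,L_+v\rangle_{L^2}+(q-2)\|Q\|_q^{2-2q}\left(\int_{\R^d} Q^{q-1}v\,dx\right)^2
\end{equation*}
and $L_+:=-\Delta-(q-1)\|Q\|_q^{2-q}Q^{q-2}+\mathcal C'_{q,d}$. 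The assumption thus forces $\mathcal Q[v_n]\to 0$.

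Extract a weak limit $v_n\rightharpoonup v_*$ in $H^1$. The orthogonality conditions $\langle v_*,\partial_jQ\rangle_{H^1}=0$ and $\langle Q,v_*\rangle_{L^2}=0$ pass to the limit. Since $Q$ decays exponentially, the terms $\int Q^{q-2}v_n^2\,dx$ and $\int Q^{q-1}v_n\,dx$ are weakly continuous (by Rellich--Kondrachov on bounded sets together with tightness coming from the exponential decay of $Q$), while $\|\nabla v\|_2^2$ and $\|v\|_2^2$ are weakly lower semi-continuous; hence $\mathcal Q[v_*]\le\liminf_n\mathcal Q[v_n]=0$. On the other hand, since $Q$ minimizes $\mathcal E$ on the unit $L^2$-sphere and $v_*$ is tangent to this sphere at $Q$ (because $\langle Q,v_*\rangle_{L^2}=0$), the second-variation inequality yields $\mathcal Q[v_*]\ge 0$. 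Thus $\mathcal Q[v_*]=0$.

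The main obstacle is to conclude $v_*=0$ from $\mathcal Q[v_*]=0$ and the constraints. For this the crucial ingredient is the \emph{non-degeneracy} of $L_+$: its kernel in $L^2(\R^d)$ equals $\mathrm{span}\{\partial_1Q,\dots,\partial_dQ\}$. This spectral fact follows from Kwong's theorem \cite{K} via a spherical-harmonic decomposition of $L_+$---the radial sector is non-degenerate because any radial zero mode would, through the implicit function theorem applied to \eqref{eq:eleq}, produce a second branch of radial ground states, contradicting uniqueness, and the non-radial sectors are controlled using the strict monotonicity of $Q$. Combining the non-degeneracy with the Euler--Lagrange equation for the constrained minimizer $v_*$ of $\mathcal Q$, and using the identity $\int Q^{q-1}\partial_jQ\,dx=\tfrac{1}{q}\partial_j\int Q^q\,dx=0$ to decouple the rank-one term, one forces $v_*\in\mathrm{span}\{\partial_jQ\}$; the $H^1$-orthogonality to each $\partial_jQ$ (and linear independence of the $\partial_jQ$ in $H^1$) then gives $v_*=0$. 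Finally, $v_n\rightharpoonup 0$ in $H^1$ together with the weak continuity of the non-quadratic terms reduces $\mathcal Q[v_n]$ to $\|\nabla v_n\|_2^2+\mathcal C'_{q,d}\|v_n\|_2^2+o(1)\ge\min\{1,\mathcal C'_{q,d}\}\|v_n\|_{H^1}^2+o(1)$, contradicting $\mathcal Q[v_n]\to 0$ since $\|v_n\|_{H^1}=1$.
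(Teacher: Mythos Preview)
Your proof is correct, and it takes a genuinely different route from the paper's argument. Both proofs begin identically: one isolates a nearest $\phi\in\mathcal G$, uses its $H^1$-criticality to obtain the orthogonality $\langle j,\partial_iQ\rangle_{H^1}=0$, and Taylor-expands to reach $\mathcal E[\psi]+\mathcal C'_{q,d}=(j,Hj)+o(\|j\|_{H^1}^2)$, where $H$ is the Hessian operator (your $\mathcal Q$ is exactly the quadratic form of the paper's $H$). Both also rest on the same non-degeneracy input coming from Kwong, namely that $\ker H=\spa\{Q,\partial_1Q,\dots,\partial_dQ\}$ (the paper records this as Theorem~\ref{kwong2}). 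The divergence is in how the coercivity $(j,Hj)\geq c\|j\|_{H^1}^2$ is extracted under the available constraints. The paper argues \emph{directly}: it projects $j$ onto $(\ker H)^\perp$ in $L^2$ to obtain $k$, uses the spectral gap $(k,Hk)\geq g\|k\|_2^2$, interpolates with the crude bound $H\geq -\Delta -C$ to upgrade this to an $H^1$ lower bound, and finally checks via the orthogonality relations and the almost-orthogonality $(Q,j)=-\tfrac12\|j\|_2^2$ that $\|k\|_{H^1}^2=\|j\|_{H^1}^2+O(\|j\|_{H^1}^3)$. You instead run a Bianchi--Egnell contradiction: normalize $\|v_n\|_{H^1}=1$, pass to a weak limit $v_*$, use weak lower semicontinuity and compactness of the potential terms (exploiting the exponential decay of $Q$) to get $\mathcal Q[v_*]=0$, invoke non-degeneracy plus the constraints to force $v_*=0$, and then observe that $\mathcal Q[v_n]\to\|\nabla v_n\|_2^2+\mathcal C'_{q,d}\|v_n\|_2^2\geq \min\{1,\mathcal C'_{q,d}\}>0$. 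The paper's route is slightly more quantitative---its constant is visibly $g/(2(g+C+1))$ in terms of the spectral gap $g$ of $H$---whereas your compactness argument yields a purely existential constant; on the other hand, your argument avoids the explicit $L^2$-to-$H^1$ interpolation and the bookkeeping in comparing $\|k\|_{H^1}$ with $\|j\|_{H^1}$. Two minor points: the map $a\mapsto\|\psi_n-Q(\cdot-a)\|_{H^1}$ does not diverge as $|a|\to\infty$ but tends to the finite value $(\|\psi_n\|_{H^1}^2+\|Q\|_{H^1}^2)^{1/2}$, which is still strictly larger than the near-zero infimum, so attainment holds as you claim; and your implicit-function-theorem heuristic for radial non-degeneracy is only a sketch---the paper simply cites Kwong's non-degeneracy result for $L_+$ directly, which is the cleanest way to phrase this step.
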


Assuming Lemma \ref{beclose} for the moment we complete the

\begin{proof}[Proof of Theorem \ref{berem}]
Because of Lemma \ref{beclose} it suffices to prove Theorem \ref{berem} for functions satisfying the additional assumption $\inf_{\phi\in\mathcal G} \| \psi-\phi\|_{H^1} > \epsilon$ for some $\epsilon>0$. We argue by contradiction and assume there is a sequence of functions $\psi_n\in H^1(\R^d)$ with $\int_{\R^d} \psi_n^2\,dx =1$, $\inf_{\phi\in\mathcal G} \| \psi_n-\phi\|_{H^1} > \epsilon$ and
$$
\int_{\R^d} |\nabla\psi_n|^2 \,dx - \left( \int_{\R^d} |\psi_n|^q \,dx \right)^{2/q} \leq -\mathcal C_{q,d}' + \delta_n \inf_{\phi\in\mathcal G} \| \psi_n-\phi\|_{H^1}^2
$$
where $\delta_n\to 0$. Since $\inf_{\phi\in\mathcal G} \| \psi_n-\phi\|_{H^1}$ is bounded from above uniformly in $n$, the sequence $(\psi_n)$ is a minimizing sequence for $-\mathcal C_{q,d}'$ and therefore, by Lemma \ref{befar}, $\inf_{\phi\in\mathcal G} \| \psi_n-\phi\|_{H^1} \to 0$ as $n\to\infty$. This is a contradiction.
\end{proof}

Thus, we have reduced the proof of Theorem \ref{berem} to the proof of Lemma \ref{beclose}, which we will prove in the remaining two subsections.


\subsection{Non-degeneracy of the linearization}

The main ingredient in our proof of Lemma \ref{beclose} is the following theorem which can be deduced from Kwong's results for $d\geq 2$ (and well-known results for $d=1$).

\begin{theorem}\label{kwong2}
Let $2<q<\infty$ if $d=1,2$ and $2<q<2d/(d-2)$ if $d\geq 3$. Let $Q$ and $E$ be the function and the number from Theorem \ref{kwong} and consider the self-adjoint operator
$$
H= -\Delta - (q-1)\|Q\|_{q}^{2-q} Q^{q-2} - E + (q-2) \|Q\|_q^{2-2q} |Q^{q-1}\rangle\langle Q^{q-1}|
$$
in $L^2(\R^d)$. Then $H\geq 0$ and
$$
\ker H = \spa \{Q,\partial_1 Q,\ldots,\partial_d Q \} \,. 
$$
\end{theorem}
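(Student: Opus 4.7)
The plan is to recognize $H$ as twice the Hessian of the Lagrangian $\mathcal L[\psi]=\|\nabla\psi\|_2^2-\|\psi\|_q^2-E(\|\psi\|_2^2-1)$ at $\psi=Q$. Non-negativity will then follow from the constrained minimality of $Q$, while the kernel characterization reduces to Kwong's non-degeneracy theorem \cite{K} for the linearization of the Euler--Lagrange equation.

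First, I would verify directly that $Q$ and each $\partial_i Q$ lie in $\ker H$. Write $L=-\Delta-(q-1)\|Q\|_q^{2-q}Q^{q-2}-E$ for the differential part of $H$, so that $H=L+(q-2)\|Q\|_q^{2-2q}|Q^{q-1}\rangle\langle Q^{q-1}|$. The Euler--Lagrange equation $-\Delta Q-\|Q\|_q^{2-q}Q^{q-1}=EQ$ gives $LQ=-(q-2)\|Q\|_q^{2-q}Q^{q-1}$, and since $\langle Q^{q-1},Q\rangle=\|Q\|_q^q$, the rank-one term cancels this exactly, producing $HQ=0$. Differentiating the Euler--Lagrange equation in $x_i$ shows $L\partial_i Q=0$, while $\langle Q^{q-1},\partial_i Q\rangle=q^{-1}\int\partial_i(Q^q)\,dx=0$ by integration by parts, so $H\partial_i Q=0$.

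For $H\geq 0$ I would use that the minimality of $Q$ on $\{\|\psi\|_2=1\}$ yields $\tfrac{d^2}{dt^2}\mathcal E[\psi(t)]\big|_{t=0}\geq 0$ for the admissible path $\psi(t)=(Q+t\eta)/\|Q+t\eta\|_2$ whenever $\langle Q,\eta\rangle=0$. A direct Lagrange-multiplier computation identifies this second derivative with $2\langle\eta,H\eta\rangle$, so $H\geq 0$ on $Q^\perp$. To extend to all of $L^2$, decompose an arbitrary $\eta=cQ+\eta_\perp$ with $\eta_\perp\perp Q$; because $HQ=0$ and $H$ is self-adjoint, both the diagonal term $c^2\langle Q,HQ\rangle$ and the cross term $2c\langle Q,H\eta_\perp\rangle=2c\langle HQ,\eta_\perp\rangle$ vanish, leaving $\langle\eta,H\eta\rangle=\langle\eta_\perp,H\eta_\perp\rangle\geq 0$.

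The main obstacle is pinning down $\ker H$. If $H\eta=0$, then $L\eta$ is a scalar multiple of $Q^{q-1}$, and the identity $LQ=-(q-2)\|Q\|_q^{2-q}Q^{q-1}$ allows me to choose a scalar $\alpha$ so that $\eta-\alpha Q\in\ker L$. The essential input is therefore $\ker L=\spa\{\partial_1 Q,\dots,\partial_d Q\}$. To obtain this, I rescale as in the proof of Theorem~\ref{kwong}: $\widetilde Q(y)=|E|^{-1/(q-2)}\|Q\|_q^{-1}Q(y/\sqrt{|E|})$ satisfies $-\Delta\widetilde Q+\widetilde Q=\widetilde Q^{q-1}$, and a change of variables shows $L$ is a positive scalar multiple of the pullback of $\widetilde L=-\Delta+1-(q-1)\widetilde Q^{q-2}$ under $y=\sqrt{|E|}\,x$. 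Kwong's theorem \cite{K} (together with the explicit one-dimensional solution for $d=1$) gives $\ker\widetilde L=\spa\{\partial_i\widetilde Q\}$, which transfers back to the desired identification of $\ker L$. Combined with the reduction above, this forces $\eta\in\spa\{Q,\partial_1 Q,\dots,\partial_d Q\}$, completing the proof.
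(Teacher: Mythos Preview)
Your argument is correct and follows the same overall route as the paper: identify $H$ as the Hessian of the constrained functional to get $H\geq 0$, verify $Q,\partial_i Q\in\ker H$ from the Euler--Lagrange equation, and reduce the reverse inclusion to the kernel of $L$ via $L(\eta-\alpha Q)=0$. The one difference worth noting is in packaging: you invoke the full non-degeneracy $\ker\widetilde L=\spa\{\partial_i\widetilde Q\}$ as a black box attributed to \cite{K}, whereas the paper is more self-contained here---it decomposes $H$ (equivalently $L$) by angular momentum, disposes of $l\geq 1$ directly via Perron--Frobenius and monotonicity of the centrifugal term $l(l+d-2)/r^2$, and applies Kwong's result only in the form he literally proved, namely that $\ker L$ restricted to radial functions is trivial. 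Your shortcut is standard in the literature, but be aware that its proof is precisely the spherical-harmonics argument the paper spells out.
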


\begin{proof}[Proof of Theorem \ref{kwong2}]
We now discuss the kernel of the operator $H$. First, note that $H$ is the Hessian of the minimization problem for $E$, in the sense that,
$$
\left. \frac{d^2}{d\epsilon^2} \right|_{\epsilon=0} \left( \left\|\nabla \frac{Q+\epsilon\phi}{\|Q+\epsilon\phi\|_2} \right\|_2^2 - \left\| \frac{Q+\epsilon\phi}{\|Q+\epsilon\phi\|_2} \right\|_q^2 \right) = (\phi,H\phi) 
$$
for every $\phi\in H^1(\R^d)$. This is shown by the same arguments as in the proof of Lemma \ref{beclose}. Thus, since $Q$ is a minimizer, $H\geq 0$, proving the first claim.

The inclusion $\spa\{Q,\partial_1 Q,\ldots,\partial_d Q\}\subset\ker H$ is easy. Indeed, equation \eqref{eq:eleq} implies that $Q\in\ker H$ and, differentiating \eqref{eq:eleq}, we also infer that $\partial_i Q\in\ker H$ for any $i=1,\ldots,d$. (Note that $\partial_i Q\in H^1(\R^d)$ by the regularity and decay results for $Q$ mentioned above.)

Let us prove the opposite inclusion. Since $Q$ is a radial function, the operator $H$ commutes with rotation and, therefore, can be analysed separated in each angular momentum channel. (In dimension $d=1$, this means separately on even and odd functions.) Again, since $Q$ is radial, the functions $\partial_i Q$ correspond to angular momentum one. Indeed, they are of the form $f(r) x_j/r$, where $x_j/|x|$ is a spherical harmonic of degree one. Since $Q$ is symmetric decreasing, the function $f$ is negative and therefore, by the Perron--Frobenius theorem, the $\partial_i Q$'s are the ground states of $H$ in the channel of angular momentum one and there no further elements in the kernel of $H$ restricted to that subspace. Since the restriction of $H$ to the subspace of angular momentum $l$ contains an additional term $l(l+d-2)/r^2$, when comparing this operator with $l\geq 2$ to the operator $l=1$, we see that the restriction of $H$ to the subspace of angular momentum $l$ is strictly positive for $l\geq 2$ and has trivial kernel.

Thus, it remains to prove the operator $H$ when restricted to angular momentum $l=0$ (that is, to radial functions) has only a one-dimensional kernel spanned by $Q$. To prove this, let $\eta$ be a radial function in $\ker H$ with $(Q,\eta)=0$. Then
\begin{equation}
\label{eq:eta}
L\eta = \alpha Q^{q-1} \,,
\end{equation}
where $\alpha=-(q-2)\|Q\|_q^{2-2q} (Q^{q-1},\eta)$ and
$$
L= -\Delta -(q-1)\|Q\|_q^{2-q} Q^{q-2} -E \,.
$$
Because of the Euler--Lagrange equation \eqref{eq:eleq} satisfied by $Q$ we have
$$
L(\eta -\beta Q) = 0
$$
for $\beta = -(q-2)^{-1}\|Q\|_q^{q-2} \alpha$, that is, $\eta-\beta Q \in\ker L$. Now the non-degeneracy result of Kwong in $d\geq 2$ (and an explicit computation in $d=1$) implies that $\eta\equiv 0$, as claimed.

(Strictly speaking, Kwong's result concerns the operator
$$
\tilde L = -\Delta -(q-1) \tilde Q^{q-2} +1
$$
with $\tilde Q(x)= |E|^{-1/(q-2)} Q(x/\sqrt{|E|})/\|Q\|_q$ as before. But this operator is unitarily equivalent to the operator $|E|^{-1} L$ by scaling. In particular, the kernel of $\tilde L$ on radial functions is trivial if and only if the same is true for that of $L$.)
\end{proof}


\subsection{Proof of Lemma \ref{beclose}}

With Theorem \ref{kwong2} at hand we can finally give the proof of Lemma \ref{beclose}.

\begin{proof}[Proof of Lemma \ref{beclose}]
Let $\psi\in H^1(\R^d)$ with $\int \psi^2\,dx =1$. After a translation and a change of sign, if necessary, we may assume that
$$
\inf_{\phi\in\mathcal G} \| \psi-\phi\|_{H^1} = \|\psi - Q\|_{H^1} \,.
$$
This implies that
\begin{equation}
\label{eq:ortho}
\left( \psi-Q,\partial_j Q \right)_{H^1} = 0 \qquad \text{for all}\ j=1,\ldots,d \,.
\end{equation}
Let us introduce $j=\psi-Q$ and note that, since $\int \psi^2\,dx =1 =\int Q^2\,dx$,
\begin{equation}
\label{eq:almostortho}
2 \int_{\R^d} Q j \,dx + \int_{\R^d} j^2 \,dx = 0 \,.
\end{equation}
We now make use of the fact that
$$
\left| |a+b|^q - a^q - q a^{q-1} b -\frac{q(q-1)}{2} a^{q-2} b^2 \right| \leq C \left( a^{q-2-\theta} |b|^{2+\theta} + |b|^q \right)
$$
for all $a>0$, $b\in\R$ and some $C$ (depending only on $q>2$) and $\theta=\min\{q-2,1\}$. Thus, by H\"older's inequality,
\begin{align}
\label{eq:expansionq}
\|\psi\|_q^q = \|Q\|_q^q + q \int_{\R^d} Q^{q-1} j \,dx + \frac{q(q-1)}{2} \int_{\R^d} Q^{q-2} j^2 \,dx + O( \|j\|_q^{2+\theta} + \|j\|_q^q )
\end{align}
with an implied constant depending only on $q$ and $d$ (through $\|Q\|_q$). Moreover, by H\"older and Sobolev inequalities
\begin{align*}
& \left| q \int_{\R^d} Q^{q-1} j \,dx + \frac{q(q-1)}{2} \int_{\R^d} Q^{q-2} j^2 \,dx + O( \|j\|_q^{2+\theta} + \|j\|_q^q ) \right| \\
& \qquad \leq \const \left( \|j\|_q + \|j\|_q^q \right)
 \leq \const \left( \|j\|_{H^1} + \|j\|_{H^1}^q \right)
\end{align*}
with constants depending only on $q$ and $d$. We conclude that there is an $\epsilon>0$ (depending only on $q$ and $d$) such that
$$
\left| \|\psi\|_q^q - \|Q\|_q^q \right| \leq \frac{1}{2} \|Q\|_q^q 
\qquad \text{provided}\ \|j\|_{H^1}\leq\epsilon \,.
$$
For such $j$ we can take the $2/q$-th power of \eqref{eq:expansionq} and obtain
\begin{align*}
\|\psi\|_q^2 = & \|Q\|_q^2 + 2 \|Q\|_q^{2-q} \int_{\R^d} Q^{q-1} j \,dx \\
& + (q-1) \|Q\|_q^{2-q} \int_{\R^d} Q^{q-2} j^2 \,dx - (q-2) \|Q\|_q^{2-2q} \left( \int_{\R^d} Q^{q-1} j\,dx \right)^2 \\
& + O( \|j\|_q^{2+\theta})
\end{align*}
with an implied constant depending only on $q$ and $d$. Recalling equation \eqref{eq:eleq} for $Q$ and condition \eqref{eq:almostortho} for $j$ we obtain
\begin{align}
\|\nabla\psi\|^2 - \|\psi\|_q^2 & = E + 2\int_{\R^d} \nabla Q\cdot\nabla j\,dx - 2 \|Q\|_q^{2-q} \int_{\R^d} Q^{q-1} j \,dx \notag \\
& \quad + \|\nabla j\|^2 - (q-1) \|Q\|_q^{2-q} \int_{\R^d} Q^{q-2} j^2 \,dx \notag \\
& \quad + (q-2) \|Q\|_q^{2-2q} \left( \int_{\R^d} Q^{q-1} j\,dx \right)^2 + O( \|j\|_q^{2+\theta}) \notag \\
& = E + (j,Hj) + O( \|j\|_q^{2+\theta}) \,. \label{eq:linearization}
\end{align}
We now define
$$
k = j - (Q,j)Q - \sum_{i=1}^d \frac{(\partial_i Q,j)}{\|\partial_i Q\|^2} \partial_i Q
$$
and note that, according to Theorem \ref{kwong}, $k$ is $L^2$-orthogonal to the kernel of $H$. Since the essential spectrum of $H$ starts at $-E>0$ there is a constant $g>0$ such that
$$
(j,Hj) = (k,Hk)\geq g\|k\|_2^2 \,.
$$
On the other hand, it is easy to see that there is a constant $C>0$ such that
$$
H \geq -\Delta - C \,.
$$
(Indeed, one can take $C= \left\| (q-1)\|Q\|_q^{2-q} Q^{q-2} + E \right\|_\infty = (q-1)\|Q\|_q^{2-q} Q(0)^{q-2} + E$.)
Thus, for every $0<\rho<1$,
$$
(j,Hj)=(k,Hk) \geq g(1-\rho) \|k\|_2^2 + \rho \|\nabla k\|_2^2 -\rho C \|k\|_2^2
$$
and, upon choosing $\rho=g/(g+C+1)$,
$$
(j,Hj) = (k,Hk) \geq \frac{g}{g+C+1} \|k\|_{H^1}^2 \,.
$$
Recalling the orthogonality conditions \eqref{eq:ortho} and \eqref{eq:almostortho} we compute
\begin{align*}
\|k\|_{H^1}^2 & = \|j\|_{H^1}^2 + |(Q,j)|^2 \|Q\|_{H^1}^2 + \sum_{i=1}^d \frac{|(\partial_i Q,j)|^2}{\|\partial_i Q\|^4} \|\partial_i Q \|_{H^1}^2 - 2 (Q,j) (j,Q)_{H^1} \\
& = \|j\|_{H^1}^2 + |(Q,j)|^2 \|Q\|_{H^1}^2 + \sum_{i=1}^d \frac{|(\partial_i Q,j)|^2}{\|\partial_i Q\|^4} \|\partial_i Q \|_{H^1}^2 + \|j\|_2^2 (j,Q)_{H^1} \,. 
\end{align*}
Here we used the fact that the $\partial_i Q$'s are $H^1$ orthogonal among each other and to $Q$. This simply follows from the fact that $\partial_i Q$ is a radial function times the spherical harmonic $x_i/|x|$ of degree one. Thus,
$$
(j,Hj) \geq \frac{g}{g+C+1} \|j \|_{H^1}^2 + O(\|j\|_{H^1}^3) \,.
$$
We insert this bound into \eqref{eq:linearization} and obtain, after decreasing $\epsilon$ if necessary,
$$
\|\nabla\psi\|^2 - \|\psi\|_q^2 \geq E + \frac{g}{2(g+C+1)} \|j \|_{H^1}^2 \,.
$$
This completes the proof of Lemma \ref{beclose}.
\end{proof}



\bibliographystyle{amsalpha}

\end{document}